\documentclass[11pt]{article}
\usepackage{amsmath}
\usepackage{amsthm}
\usepackage{amssymb}
\usepackage{url}
\usepackage{mathrsfs}
\usepackage{amscd}
\usepackage{multirow}
\usepackage{bigdelim}
\usepackage{graphicx}
\usepackage{picture}
\usepackage{subfigure}
\usepackage{enumerate}

\usepackage{xcolor}
\usepackage[normalem]{ulem}
\newcommand{\green}{\color{green!55!black}}
\newcommand{\COMMENT}[1]{{\green\em #1}}

\usepackage{cancel}

\newcommand{\tzeta}{\tilde{\zeta}}
\newcommand{\tz}{\tilde{z}}

\newcommand{\tc}{\tilde{\chi}}
\newcommand{\tphi}{\tilde{\phi}}

\newcommand{\RR}{\mathbb{R}}

\newcommand{\tk}{\tilde{k}}
\newcommand{\tK}{\tilde{K}}

\newcommand{\tG}{\tilde{\Gamma}}
\newcommand{\tL}{\tilde{\mathcal{K}}}
\newcommand{\K}{\mathcal{K}}
\newcommand{\tl}{\tilde{\lambda}}

\newcommand{\Po}{\mathcal{P}}
\newcommand{\p}{p}
\newcommand{\tp}{\tilde{\p}}
\newcommand{\stro}{\mathfrak{s}}

\newcommand{\T}{\mathbb{T}}
\newcommand{\Tt}{\mathbb{T}_T}
\newcommand{\X}{\mathcal{X}}
\newcommand{\U}{\mathcal{U}}
\newcommand{\tW}{\tilde{W}}
\newcommand{\W}{\mathcal{W}}
\newcommand{\Ered}{E_{\text{red}}}
\newcommand{\condsHomo}{\eqref{eq:restriction1}--\eqref{eq:restriction3}}
\newcommand{\condsh}{ {\em h1}--{\em h2}}

\newtheorem{lemma}{Lemma}
\newtheorem{remark}{Remark}
\newtheorem{proposition}{Proposition}
\graphicspath{
{figures/}
{figures/inner_dynamics/}
{figures/global_dynamics/}
{figures/Kf_theta-c-w/}
{figures/Kf_theta-c-w2/}
{figures/Kf_theta-c-w3/}
{figures/Kf_theta-c-w4/}
{figures/Kf_x-y-w/}
{figures/Kf_x-y-w2/}
{figures/coupled_piezo/}
{figures/figure_eight/}
{figures/unpert_periods/}
{figures/only_damping/}
}

\begin{document}
\unitlength=\textwidth
\title{Invariant manifolds and the parameterization method in coupled
energy harvesting piezoelectric oscillators\thanks{The research
leading to these results has received funding from the People
Programme (Marie Curie Actions) of the European Union's Seventh
Framework Programme (FP7/2007-2013) under REA grant agreement no.
609405 (COFUNDPostdocDTU).  This work has also been partially
supported by  MINECO MTM2015-65715-P Spanish grant. We acknowledge the
use of the UPC Dynamical Systems group's cluster for research
computing ({https://dynamicalsystems.upc.edu/en/computing/})}}
\author{Albert Granados\footnotemark[2]}
\date{}
\maketitle
\renewcommand{\thefootnote}{\fnsymbol{footnote}}
\footnotetext[2]{algr@dtu,dk, Department of Applied Mathematics and
Computer Science, Technical University of Denmark, Building 303B, 2800
Kgns. Lyngby, Denmark.}
\renewcommand{\thefootnote}{\arabic{footnote}}
\begin{abstract}
Energy harvesting systems based on oscillators aim to capture energy
from mechanical oscillations and convert it into electrical energy.
Widely extended are those based on piezoelectric materials, whose
dynamics are Hamiltonian submitted to different sources of
dissipation: damping and coupling. These dissipations bring the system
to low energy regimes, which is not desired in long term as it
diminishes the absorbed energy.  To avoid or to minimize such
situations, we propose that the coupling of two oscillators could
benefit from theory of Arnold diffusion. Such phenomenon studies
$O(1)$ energy variations in Hamiltonian systems and hence could be
very useful in energy harvesting applications.
This article is a first step towards this goal. We consider two
piezoelectric beams submitted to a small forcing and coupled through
an electric circuit. By considering the coupling, damping and forcing
as perturbations, we prove that the unperturbed system possesses a
$4$-dimensional Normally Hyperbolic Invariant Manifold with $5$ and
$4$-dimensional stable and unstable manifolds, respectively. These are
locally unique after the perturbation. By means of the
parameterization method, we numerically compute parameterizations of
the perturbed manifold, its stable and unstable manifolds and  study
its inner dynamics. We show evidence of homoclinic connections when
the perturbation is switched on.
\end{abstract}
\noindent {\bf Keywords}: damped oscillators, energy harvesting
systems, parameterization method, normally hyperbolic invariant
manifolds, homoclinic connections, Arnold diffusion.
\tableofcontents
\section{Introduction}
Energy harvesting systems consists of devices able to absorb energy
from the environment and, typically, electrically power a load or accumulate
electrical energy in accumulators (super capacitors or batteries) for
later use. One of the most extended approaches is by means of
piezoelectric materials, which, under a mechanical strain, generate an
electric charge. Such materials are however mostly observed working in the inverse
way in, for example, most cell phones: they generate a
vibration when driven by a varying voltage.\\
Most energy harvesting systems based on piezoelectric materials aim to
absorb energy from machine vibrations, pedestrian walks or wind
turbulences, and can power loads ranging from tiny sensors through
small vibrations to small communities through networks of large
piezoelectric ``towers'' submitted to wind turbulences. One of the
most extended configurations consists of a piezoelectric beam or
cantilever. Due to the viscous nature of the piezoelectric materials,
they behave like damped oscillators which, in absence of a strong
enough external forcing, tend to oscillate with small amplitude close
to the resting position. In order to benefit higher energy
oscillations, a typical approach consists of locating two magnets in
inverse position as in Figure~\ref{fig:single_piezo}. If the magnets
are strong enough with respect to the damping of the beam, in the
absence of an external forcing,  the resting vertical position
(previously an attracting focus) becomes an unstable (saddle)
equilibrium and two new attracting foci appear pointing to each of the
magnets.\\
\begin{figure}
\begin{center}
\begin{picture}(1,0.4)
\put(0,0){
\subfigure[\label{fig:damped_beam}]{\includegraphics[width=0.4\textwidth]{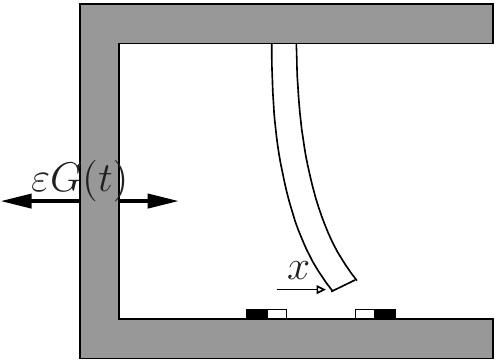}}
}
\put(0.51,0){
\subfigure[\label{fig:single_piezo}]{\includegraphics[width=0.4\textwidth]{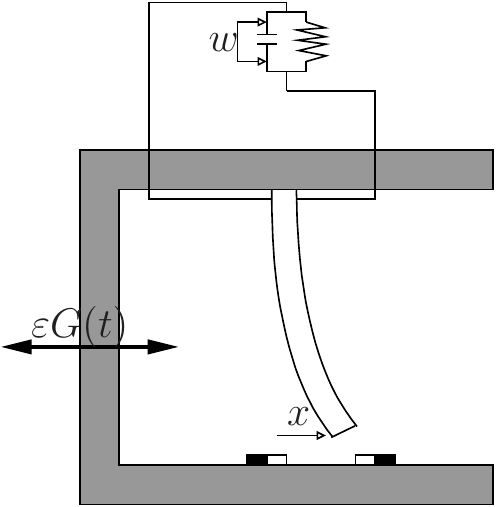}}
}
\end{picture}
\end{center}
\caption{Generic elastic beam (a) and piezoelectric beam (b) subject
to the influence of two magnets and a small periodic forcing.}
\end{figure}
The equations of motions for a generic (not necessarily piezoelectric)
damped and forced beam with magnets as the one shown in
Figure~\ref{fig:damped_beam} were first derived in~\cite{MooHol79},
which were shown to be a Duffing equation:
\begin{equation*}
\ddot{x}+2\zeta \dot{x}-\frac{1}{2}x\left( 1-x^2 \right)=\varepsilon
G(t),
\end{equation*}
where $x$ is the dimensionless horizontal displacement of the lower end, $\zeta$
is the damping coefficient and $\varepsilon G(t)$ a small periodic
forcing. When a piezelectric beam is connected to a load in the upper
end (as in Figure~\ref{fig:single_piezo}), the load receives a certain
power, a voltage $w$, whose time-derivative is proportional to the
speed of lower displacement. From the point of view of the load, the
piezoelectric beam acts as a capacitor. Hence, the voltage $w$ follows
the discharge law of a capacitor:
\begin{equation*}
\dot{w}=-\lambda w-\kappa \dot{x},
\end{equation*}
where $\lambda$ is a time constant associated with the capacitance of
the piezoelectric beam and the resistance of the load, and $\kappa>0$
is the electrical piezoelectric constant.  However, in such a
configuration, a mechanical auto-coupling effect occurs: the beam sees
its own generated voltage $w$ and the piezoelectric properties of the
beam generates a strain opposite to the currently applied one.  This
not only has a dissipative effect, as it slows down the beam, but also
increases the dimension of the system by one (see~\cite{ErtHofInm09}).
The system then
becomes:
\begin{align*}
&\ddot{x}+2\zeta
\dot{x}-\frac{1}{2}x(1-x^2)-\chi w=\varepsilon G(t)\\
&\dot{w}=-\lambda w -\kappa \dot{x},
\end{align*}
where $\chi>0$ is the mechanical piezoelectric constant.

The length of a piezoelectric beams or cantilevers plays a crucial
role in the efficiency of the energy harvesting system, as it
determines the frequency of the external forcing, $\varepsilon G(t)$,
for which the device is ``optimal''.  Therefore, such devices need to
be designed to resonate at a particular frequency. A big effort has
been done from the design point of view to broaden this bandwidth. A
common approach, introduced in~\cite{KimJunLeeJan11}, is to consider
coupled oscillators of different lengths such that the device exhibits
different voltage peaks at different frequencies. Other approaches
consider different structural configurations (\cite{ErtRenInm09}) to
achieve a similar improvement, or study the number of piezoelectric
layers connected in different series-parallel configurations
(\cite{ErtInm09}). However, mathematical studies of those models seem
relegated to numerical simulations and bifurcation
analysis~\cite{Feretal09,StaOweMan12,VocNerTraGam12}. As it was
unveiled in~\cite{MooHol79}, there exist very interesting dynamical
phenomena already in the most simple case of a single beam under a
periodic forcing (as in Figure~\ref{fig:damped_beam}), such as
homoclinic tangles, horseshoes and a Duffing chaotic attractor; also,
when neglecting the damping, KAM theorem holds providing the existence
of invariant curves. These, in the absence of dissipation, are
boundaries in the state space and hence act as energy bounds.
Therefore, assuming an external forcing of $O(\varepsilon)$, the
amplitude of the oscillations of the beam cannot grow beyond this
order hence restricting the amount of energy that can be absorbed from
the source.\\
\begin{figure}
\begin{center}
\begin{picture}(1,0.4)
\put(0,0){
\subfigure[\label{fig:coupled_piezo}]
{\includegraphics[width=0.4\textwidth]{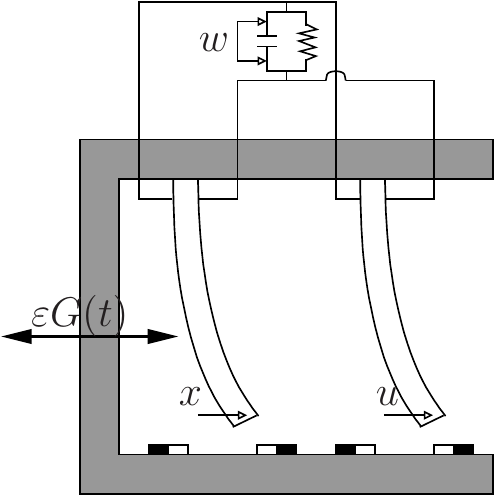}}
}
\put(0.51,0){
\subfigure[\label{fig:coupled_piezo_spring}]
{\includegraphics[width=0.4\textwidth]{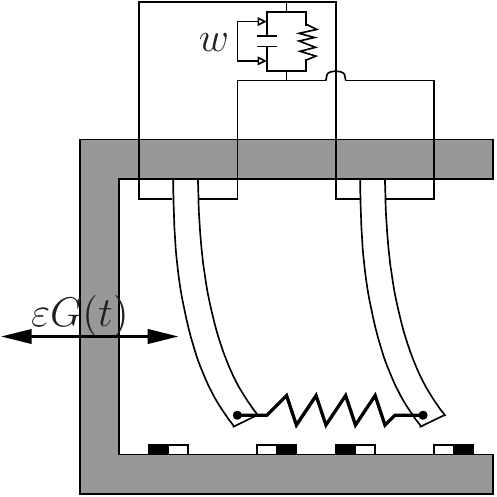}}
}
\end{picture}
\end{center}
\caption{(a) Two piezoelectric oscillators coupled through the
electrci circuit. (b) Two piezoelectric oscillators with additional
conservative coupling (a spring).}
\end{figure}%
Even more interesting from the dynamical point of view is the higher
dimensional case when considering two or more coupled damped
oscillators. A common approach is to couple them in parallel as in
Figure~\ref{fig:coupled_piezo}, although series connection is also
used (see~\cite{ErtInm09} for a comparison between parallel and series
connection of piezoelectric layers). When connected as in
Figure~\ref{fig:coupled_piezo}, the piezoelectric beams become
mechanically coupled
through the piezoelectric coupling effect: the voltage generated by
one beam accelerates or slows down the other one through the electric
circuit. This can be modeled with the following equations
(\cite{Litetal11})
\begin{equation}
\begin{aligned}
&\ddot{x}+2\zeta
\dot{x}-\frac{1}{2}x(1-x^2)-\chi w=\varepsilon G(t)\\
&\ddot{u}+2\zeta
\dot{u}-\frac{1}{2}u(1-u^2)-\chi w=\varepsilon G(t)\\
&\dot{w}=-\lambda w-\kappa \left( \dot{x}+\dot{u} \right).
\end{aligned}
\label{eq:model_nospring}
\end{equation}
Note that the only coupling term, $\chi w$, is dissipative.\\
When neglecting dissipative terms ($\zeta=\chi=0$), the dimension of
KAM tori is not large enough to act as energy bounds and one may
observe {\em Arnold diffusion} (\cite{Arn64}): existence of
trajectories exhibiting $O(1)$ growth in their ``energy'' when the
device is driven by an arbitrarily small periodic forcing
($0<\varepsilon\ll 1$).  Therefore, if oscillators are conservatively
coupled, the phenomenon of Arnold diffusion could help such devices to
exhibit robustness to the frequency of the periodic source and higher
efficiency than acting separately. Hence, in order to increase the
chances of taking advantage of this phenomenon, we propose to
introduce a conservative coupling between the oscillators.
Physically, such coupling can be achieved by introducing a spring
linking the two beams, as in Figure~\ref{fig:coupled_piezo_spring}.
Assuming that beams are equal and that the displacement of their lower
ends is only horizontal, the spring is kept horizontal. In this case,
the model becomes
\begin{equation}
\begin{aligned}
&\ddot{x}+2\zeta
\dot{x}-\frac{1}{2}x(1-x^2)-\chi w-k(u-x)=\varepsilon G(t)\\
&\ddot{u}+2\zeta
\dot{u}-\frac{1}{2}u(1-u^2)-\chi w-k(x-u)=\varepsilon G(t)\\
&\dot{w}=-\lambda w-\kappa \left( \dot{x}+\dot{u} \right),
\end{aligned}
\label{eq:model_spring}
\end{equation}
where $k$ is the elastic constant of the spring. These equations are
also obtained when linearising around the horizontal position of the
beam.

Arnold diffusion was introduced in the celebrated paper of
Arnold~\cite{Arn64}. Recently, researchers have achieved impressive
advances providing rigorous results to prove the existence of such
trajectories in general Hamiltonian
systems~\cite{BerKalZha16,GidLlaSea14,Mar16c}. The most paradigmatic
applications of Arnold diffusion are associated with classical
problems in celestial mechanics such as instabilities in the {\em restricted three-body
problem} or the Kirkwood gaps in the asteroids belt of the solar
system~\cite{FejozGKR16}, although it has also been proven in physical
examples such as ABC magnetic fields~\cite{LuqPer15}. Partial results
have also been proven in mechanical systems with
impacts~\cite{GraHogSea14}.\\
The main mechanism for diffusion is based on the existence of normally
hyperbolic invariant manifolds (NHIMs) containing the mentioned KAM
tori. By combining {\em inner} dynamics in these manifolds and
{\em outer} through homoclinic/heteroclinic excursions, such tori can
be skipped allowing the trajectories to further accumulate energy from
the source. The study of these outer excursions was greatly
facilitated by the introduction of the {\em Scattering}
map~\cite{DelLlaSea06,DelLlaSea08}.

Unfortunately, theory for Arnold diffusion is still too restrictive to
be applied in systems of the types~\eqref{eq:model_nospring}
and~\eqref{eq:model_spring}, mainly due to the presence of
dissipation, as it provides an extra obstacle to the growth of
energy.\\
In this work we present a first step on the study of Arnold diffusion
in energy harvesting systems based on damped oscillators. In
particular, we focus on a system based on the coupling of two
piezoelectric beams as in Figure~\ref{fig:coupled_piezo_spring} and we
perform a theoretical and numerical study of the invariant objects,
their dynamics and their connections by means of the parameterization
method. These objects play a crucial role in the known mechanisms for Arnold
diffusion, given by combination of dynamics close to Normally
Hyperbolic Invariant Manifolds (NHIM's) ({\em inner dynamics}) and
{\em homoclinic excursions} along the intersection of their stable and
unstable manifolds. In this article we have a less
ambitions goal and we perform a first step in this direction: we
perform a theoretical study of the existence and persistence of a
NHIM, its numerical computation as well as its inner dynamics and its
stable and unstable manifolds by means of the so-called {\em
parameterization method}~\cite{CabFondlL05,HarCanFigLuqMon16}.  A
detailed study of homoclinic intersections and the Scattering map is
left for a future work. The main difficulty relies on the dimension of
the system, which is $6$-dimensional and the presence of dissipation
in both the oscillators (through damping) and the coupling (inverse
piezoelectric effect).\\

This work is organized as follows. In
Section~\ref{sec:invariant_objects} we introduce a generalized version
of the system in a perturbation setting: forcing, dissipation and
coupling are included only in $O(\varepsilon)$ terms. We analyze its
invariant objects for the unperturbed case and their persistence after
introducing the perturbation. In
Section~\ref{sec:parametrization_method} we present the theoretical
setting necessary to apply the Newton-like method introduced
in~\cite{HarCanFigLuqMon16} based on the parameterization method. In
Section~\ref{sec:numerical_results} we present the obtained numerical
results, studying the inner dynamics for different configurations
regarding the two types of dissipations (damping and piezoelectric
coupling).  Finally, we present our conclusions in
Section~\ref{sec:conclusions}.

\section{Invariant objects}\label{sec:invariant_objects}
\subsection{Generalization of the model}
As mentioned in the introduction, this paper is concerned with the
study of invariant manifolds of a particular energy harvesting system
consisting of two coupled piezoelectric beams. However, many of the
results and techniques that we show are valid for a larger class of
systems. Hence, in this section we introduce a general class of
systems that for which our results hold. We first consider a
Hamiltonian system of the form
\begin{equation}
H_\varepsilon(x,y,u,v,s)=\X(x,y)+\U(u,v)+\varepsilon h(x,y,u,v,t),
\label{eq:hamiltonian_general}
\end{equation}
with $h(x,y,u,v,t+T)=h(x,y,u,v,t)$ and $\varepsilon\ge0$ a small
parameter. Assume that
\begin{enumerate}[{\em h1}]
\item the system associated with the Hamiltonian $\X$ possesses a
saddle point, $Q_0$, with a homoclinic loop, $\gamma$, parameterized by
a function $\sigma(t):\RR\to\RR^2$:
\begin{equation*}
\gamma=\W^s(Q_0)=\W^u(Q_0)=\left\{ \sigma(t)\in\RR^2,\,t\in\RR \right\}
\end{equation*}
satisfying
\begin{equation}
\sigma'(t)=
\left( 
\begin{array}[]{cc}
0&1\\-1&0
\end{array}
 \right)
\nabla \X(\sigma(t)),
\label{eq:sigma_is_flow}
\end{equation}
($\sigma(t)$ is a solution of the Hamiltonian $\X$) and
\begin{equation*}
\lim_{t\to\pm\infty}\left| \sigma(t)-Q_0
\right|<\lim_{t\to\pm\infty}Me^{-\tilde{\lambda}\left| t \right|}=0,
\end{equation*}
for some $M,\tilde{\lambda}>0$,
\item the system associated with the Hamiltonian $\U$ is integrable in
some open set in the Liouville sense (can be written in action-angle
variables). Moreover, it satisfies the twist condition (associated
frequencies of its invariant sets are monotone).
\end{enumerate}
\begin{remark}
Alternatively, condition {h2} can be stated as follows: ``The
system associated with Hamiltonian $\U$ possesses a continuum of
periodic orbits, $\U(u,v)=c$, whose periods are monotone in $c$''.
\end{remark}
\begin{remark}\label{rem:action-angle}
One could assume that $\U(u,v)$ is given in action-angle variables:
$U(I,\phi)=\Omega(I)$ with $\Omega'(I)\neq 0$. These canonical
variables would of course simplify the notation in the theoretical
statements. However, in applications, one frequently finds systems
that are integrable but are not given in such variables (as it is our
case). Provided that this change of variables becomes difficult to
explicitly apply, we prefer to keep a general Hamiltonian $U(u,v)$ in
order to allow applications to deal with original variables as much as
possible.\\
However, in Section~\ref{sec:unperturbed_objects}, it will be useful
to introduce a parameterization introducing action-angle-like
variables, which can be easily numerically compute.
\end{remark}
\begin{remark}
Similarly as in Remark~\ref{rem:action-angle}, the Hamiltonian
$X(x,y)$ could be assumed to be a pendulum:
$X(x,y)=\frac{y^2}{2}+\cos(x)-1$. For the same reason we keep here a
general Hamiltonian $X(x,y)$.
\end{remark}
To System~\eqref{eq:hamiltonian_general} we add a small dissipative
coupling given by an extra equation leading to the $5$-dimensional
non-autonomous system
\begin{equation}
\begin{aligned}
\dot{z}&=J_1 \nabla\X(u,v)+J_2 \nabla\U(x,y)\\
&\quad+\varepsilon \Big(J_3 \nabla h(z,t)+\nu g(z,w)\Big)\\
\dot{w}&=-\lambda w +b(z),
\end{aligned}
\label{eq:hamiltonian_with_dissipation}
\end{equation}
where $z=(x,y,u,v)$,
\begin{equation*}
J_1=\left(
\begin{array}[]{cc}
0&1\\-1&0\\0&0\\0&0
\end{array}
 \right),
\quad
J_2=\left(
\begin{array}[]{cc}
0&0\\0&0\\0&1\\-1&0
\end{array}
 \right)
\end{equation*}
and
\begin{equation*}
J_3=\left(
\begin{array}[]{cccc}
0&1&0&0\\
-1&0&0&0\\
0&0&0&1\\
0&0&-1&0
\end{array}
 \right).
\end{equation*}
While $\varepsilon$ is a perturbative parameter
($0\le\varepsilon\ll1$), $\nu$ is not necessary small. The latter
allows to couple and uncouple system $z$ with $w$, distinguishing
between a conservative and dissipative behaviour regarding coordinates
$z$.

By adding time as a variable, $t=s\in\Tt=\RR/T\mathbb{Z}$, and calling
$\tz=(z,w,s)\in\RR^5\times\Tt$, we write
system~\eqref{eq:hamiltonian_with_dissipation} in an autonomous and
more compact form as
\begin{equation}
\dot{\tz}=g_0(\tz)+\varepsilon\left(g_1(\tz)+\nu g_2(\tz)\right)
\label{eq:pert_syst}
\end{equation}
with
\begin{equation*}
g_0(\tz)=\left( 
\begin{array}[]{c}
J_1\nabla\X(x,y)+J_2\nabla\U(u,v)\\-\lambda w+b(z)\\1
\end{array}
 \right),
\end{equation*}
\begin{equation*}
g_1(\tz)=\left( 
\begin{array}[]{c}
g(z,w)\\0\\ 0
\end{array}
 \right)
\end{equation*}
and
\begin{equation*}
g_2(\tz)=\left( 
\begin{array}[]{c}
J_3\nabla h(x,y,v,s)\\0\\0
\end{array}
 \right).
\end{equation*}
Note that $g_1$ contains the dissipative terms and coupling while
$g_2$ the conservative coupling and forcing.

\subsection{Invariant objects of the unpertubed system}\label{sec:unperturbed_objects}
For $\varepsilon=0$, the unperturbed system~\eqref{eq:pert_syst} becomes
\begin{equation}
\dot{\tz}=g_0(\tz),
\label{eq:unpert_syst}
\end{equation}
which consists of the crossed product of systems $(x,y)$, $(u,v)$,
$w$ and $s$.\\
As system $\U$ is integrable, it possesses periodic orbits given by
\begin{equation}
\Po^c=\left\{ (u,v)\in\RR^2,\,|\,\U(u,v)=c \right\},
\label{eq:periodic_orbits}
\end{equation}
whose period, $T_c$, is monotone with $c$ due to the twist condition.
Assume $T'_c>0$.\\
In order to construct a Normally Hyperbolic Invariant Manifold for
system~\eqref{eq:unpert_syst}, we focus on these periodic orbits for
system $\U$ while system $\X$ remains at the hyperbolic point
$Q_0=(0,0)$.  Provided that each of these periodic orbits is contained
in the energy level given by $\U(u,v)=c$, as mentioned in
Remark~\ref{rem:action-angle}, it will be useful to parametrize them
by $c$ and an angle, $\theta$. Hence we will consider ``action
angle''-like variables $(\theta,c)\in \T\times \RR$ to parametrize
periodic orbits $\Po^c$ as follows.  Let $\varphi_\U(t;u_0,v_0)$ be
the flow associated with system $\U$, such that
$\varphi_\U(0;u_0,v_0)=(u_0,v_0)$. Choose a section in $\RR^2$
transversal to all periodic orbits $\U(u,v)=c$, and let $(u_c,v_c)$
the point in that section at level of energy $c$. Then we consider the
parameterization
\begin{equation}
\begin{array}{cccc}
\p:&\T\times \RR&\longrightarrow&\RR^2\\
&(\theta,c)&\longmapsto&\varphi_\U(\theta T_c;u_c,v_c).
\end{array}
\label{eq:change_variables}
\end{equation}
As for action-angle variables, this change of variables can be
difficult (or impossible) to apply explicitly. However, as we will
show in Sections~\ref{sec:unperturbed_bundles}
and~\ref{sec:numerical_results}, it can be easily numerically
implemented.

The following lemma gives as the existence of a Normally Hyperbolic
Invariant Manifold for System~\eqref{eq:pert_syst} when
$\varepsilon=0$.
\begin{lemma}
\begin{enumerate}[i)]
\item System~\eqref{eq:unpert_syst} possesses a foliated $3$-dimensional
invariant \linebreak ma\-nifold
\begin{equation*}
\tL=Q_0\times \bigcup_{c_1\le c\le c_2}\tL^c
\end{equation*}
with
\begin{align}
\tL^c=&\Big\{
(u,v,w,s)\in\Po^c\times\RR\times\Tt,\,|\,w=w^p_0(u,v)
\Big\}\nonumber\\
\Po^c&=\Big\{(u,v)\in\RR^2,\,|\,\U(u,v)=c \Big\}\nonumber\\
w^p_0(u,v)&=\frac{e^{-\lambda
T_c}}{1-e^{-\lambda T_c}}\int_0^{T_c}
b(Q_0,\varphi_\U(s;u,v))ds\label{eq:w0p}
\end{align}
These objects can be written by means of the parameterizations
\begin{equation*}
\begin{array}[]{cccc}
\p:&\T\times(0,\infty)&\longrightarrow&\RR^2\\
&(\theta,c)&\longmapsto&\varphi_\U(\theta T_c;u_c,v_c)
\end{array}
\end{equation*}
\begin{equation*}
\begin{array}{cccc}
\tK^c:&\T\times\Tt&\longrightarrow&\RR^3\times\Tt\\
&(\theta,s)&\longmapsto&\left( p(\theta,c),w^p_0(p(\theta,c)),s \right)
\end{array}
\end{equation*}
\begin{equation*}
\begin{array}{cccc}
\tK:&\T\times \RR\times\Tt&\longrightarrow& \RR^5\times
\Tt\\
&(\theta,c,s)&\longmapsto&\left( Q_0,\tK^c(\theta,s) \right)
\end{array}
\end{equation*}
such that
\begin{align*}
\Po^c&=\p(\T,c)\\
\tL^c&=\tK^c(\T,\Tt)\\
\tL&=\tK(\T,[c_1,c_2],\Tt).
\end{align*}
\item The manifold $\tL$ is normally hyperbolic, has a
$5$-dimensional stable manifold
\begin{equation*}
\W^s(\tL)=\W^s( Q_0)\times \bigcup_{c\in[c_1,c_2]}\Po^c\times
\RR\times\Tt
\end{equation*}
and a $4$-dimensional unstable manifold forming a
homoclinic manifold,
\begin{equation*}
\tG=\W^u(\tL)\subset \W^s(\tL),
\end{equation*}
where
\begin{align*}
\tG&=\bigcup_{c_1\le c\le c_2}\Big\{\left(\sigma(\tau),u,v,w,s
\right)\in \Po^c\times\gamma\times\RR\times\Tt,\\
&\qquad \tau\in\RR,\,w=w^u_0(\tau,u,v)\Big\}\\
\end{align*}
and
\begin{align*}
w^u_0(\tau,u,v)&=\int_{-\infty}^0
\Big(b\big( \sigma(\tau+s),\varphi_\U(s;u,v) \big)-
b\big(Q_0, \varphi_\U(s;u,v)\big)\Big)e^{\lambda s}ds\nonumber\\
&+w^p_0(u,v).
\end{align*}
The unstable and stable manifolds can be parameterized by
\begin{align*}
\W^s(\tL)&=\tW^{s}(\T,[c_1,c_2],\Tt,\RR^2)\\
\W^u(\tL)&=\tW^{u}(\T,[c_1,c_2],\Tt,\RR),
\end{align*}
where
\begin{equation*}
\begin{array}{cccc}
\tW^{s}:&\T\times[c_1,c_2]\times\Tt\times\RR^2&\longrightarrow&\RR^5\times\Tt\\
&(\theta,c,s,\tau,r)&\longmapsto&\left(
\sigma(\tau),\p(\theta,c),r,s \right)
\end{array}
\end{equation*}
\begin{equation*}
\begin{array}{cccc}
\tW^{u}:&\T\times[c_1,c_2]\times\Tt\times\RR&\longrightarrow&\RR^5\times\Tt\\
&(\theta,c,s,\tau)&\longmapsto&\left(
\sigma(\tau),\p(\theta,c),w^u_0(\tau,\p(\theta,c)),s \right)
\end{array}
\end{equation*}

\end{enumerate}
\label{lem:NHIM_general}
\end{lemma}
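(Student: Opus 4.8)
The strategy is to exploit the product structure of the unperturbed system~\eqref{eq:unpert_syst}, which decouples into the $(x,y)$-subsystem governed by $\X$, the $(u,v)$-subsystem governed by $\U$, the scalar equation $\dot w=-\lambda w+b(z)$, and the trivial $\dot s=1$ on $\Tt$. For part~(i), I would first observe that fixing $(x,y)=Q_0$ gives an invariant $4$-dimensional subsystem in $(u,v,w,s)$, since $b(Q_0,\cdot)$ then drives the $w$-equation. Restricting further to a fixed energy level $\Po^c=\{\U(u,v)=c\}$, the $(u,v)$-dynamics is periodic with period $T_c$, and the $w$-equation becomes a linear non-autonomous scalar ODE with $T_c$-periodic forcing $b(Q_0,\varphi_\U(t;u,v))$; by the variation of constants formula it has a unique $T_c$-periodic solution, and evaluating the standard closed-form expression for that periodic solution yields precisely $w^p_0(u,v)$ as in~\eqref{eq:w0p}. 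This shows $\tL^c$ is invariant, hence $\tL$ is an invariant $3$-dimensional manifold foliated by the $\tL^c$; the parameterizations $\p$, $\tK^c$, $\tK$ just repackage this via the ``action–angle''-like coordinates~\eqref{eq:change_variables}, and one checks they are injective and regular (regularity of $\p$ follows from transversality of the chosen section and $T_c'>0$).

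For part~(ii), normal hyperbolicity reduces, by the product structure, to the hyperbolicity of $Q_0$ as a saddle of $\X$ (hypothesis~\emph{h1}) together with the uniform contraction rate $-\lambda<0$ in the $w$-direction: the normal bundle of $\tL$ splits as the $2$-dimensional saddle directions at $Q_0$ (one expanding, one contracting, with rates bounded below by $\tilde\lambda$) plus the $w$-direction (contracting at rate $\lambda$), while the tangent directions along $\tL$ (the flow direction on $\Po^c$, the $\partial_c$ direction, and $\partial_s$) carry at most polynomial growth; choosing the spectral gap appropriately gives the NHIM conditions. The stable manifold is then $\W^s(Q_0)\times\bigl(\text{the }(u,v,w,s)\text{-block}\bigr)$: a point $(x,y)$ converges to $Q_0$ iff it lies on $\W^s(Q_0)$, and then the $w$-coordinate is automatically asymptotic to $w^p_0$ regardless of its initial value $r$, so the whole $w=r\in\RR$ fiber is stable — this accounts for the extra dimension and gives the $5$-dimensional $\W^s(\tL)$ with parameterization $\tW^s$. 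For $\W^u(\tL)$, since $Q_0$ has a $1$-dimensional unstable direction and backward-time convergence of $w$ pins down a unique value, one solves the $w$-equation backward along the homoclinic orbit: writing $w(\tau)$ along $(\sigma(\tau+s),\varphi_\U(s;u,v))$ and requiring boundedness as $s\to-\infty$ forces the value $w^u_0(\tau,u,v)$ given by the stated convergent integral (the integrand decays like $e^{\lambda s}$ times a term that is $O(e^{-\tilde\lambda|s|})$ by~\emph{h1}, so the integral converges). This yields the $4$-dimensional $\tG=\W^u(\tL)$ and its parameterization $\tW^u$; the inclusion $\tG\subset\W^s(\tL)$ is immediate because along $\gamma$ one has both $\sigma(\tau)\in\W^u(Q_0)=\W^s(Q_0)=\gamma$ and the $w$-fiber is unconstrained in $\W^s(\tL)$.

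The main obstacle I anticipate is not any single computation but the bookkeeping needed to verify the normal hyperbolicity \emph{rates} rigorously: one must check that the tangential growth along the non-compact, foliated manifold $\tL$ (in particular in the $c$-direction, where $T_c$ and hence the variational equation depend on $c$) is genuinely dominated by the normal rates $\tilde\lambda$ and $\lambda$, uniformly on the compact $c$-range $[c_1,c_2]$. The clean way to handle this is to note that along $\tL$ the linearized flow in the $(u,v)$ and $c$ directions grows at most linearly in $t$ (center behavior of the integrable twist system), so any $\rho>1$ works for the spectral gap once we pass to the compact parameter interval; the $w$-direction contributes only contraction and cannot spoil the gap. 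A secondary technical point is justifying that $\p$ is a well-defined diffeomorphism onto its image — this uses the twist condition and the existence of a global transversal section to the periodic orbits, which should be assumed as part of the setup in~\emph{h2}. Finally, the explicit formulas for $w^p_0$ and $w^u_0$ should be double-checked against the sign conventions in $\dot w=-\lambda w+b(z)$, but these are routine once the variation-of-constants computation is set up.
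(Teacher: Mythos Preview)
Your proposal is correct and follows essentially the same route as the paper: variation of constants for the $T_c$-periodic $w$-orbit in part~(i), and for part~(ii) the fiberwise analysis of $z(t)=w(t)-w^p(t)$ along the homoclinic, using the exponential decay $|b(\sigma(\tau+s),\cdot)-b(Q_0,\cdot)|\le Me^{-\tilde\lambda|s|}$ to show $z(t)\to0$ as $t\to+\infty$ for all initial $w$ (free stable fiber) and as $t\to-\infty$ only for the distinguished value $w_0^u$. If anything, your discussion of the spectral-gap condition (linear tangential growth versus the normal rates $\tilde\lambda,\lambda$) is more careful than the paper's, which simply exhibits the exponentially converging/diverging fibers without explicitly checking domination over the center directions.
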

\begin{proof}
When $(x,y)=Q_0$, provided that $\lambda>0$, the variable $w$ is
attracted to a certain object given by the dynamics of $v$. Given
$(u_0,v_0)$ we define 
\begin{equation}
b^p(t;u_0,v_0)=b\left(Q_0,\varphi_\U(t;u_0,v_0)\right).
\label{eq:bp}
\end{equation}
Then the dynamics of $w$ restricted to $(x,y)=Q_0$ is given by
\begin{equation}
\dot{w}=-\lambda
w+b^p(t;u_0,v_0).
\label{eq:w-dynamics}
\end{equation}
Provided that Equation~\eqref{eq:w-dynamics} is linear in $w$ and
$b^p(t;u_0,v_0)$ is $T_c$-periodic, System~\eqref{eq:w-dynamics}
possesses an $T_c$-periodic orbit:
\begin{equation*}
w^p(t+T_c)=w^p(t),
\end{equation*}
which is attracting (because $\lambda>0$).\\
We compute the initial condition for such periodic orbit.  Note that,
although system~\eqref{eq:w-dynamics} is not autonomous, we can assume
that the intial conditions are given for $t=0$, since
Equation~\eqref{eq:w-dynamics} has to be integrated together with the
equations for $\dot{u}$ and $\dot{v}$, which provides a
$3$-dimensional autonomous system.  Therefore, the general solution
of~\eqref{eq:w-dynamics} becomes 
\begin{equation*}
w(t;w_0,u_0,v_0)=e^{-\lambda t}\left(\int_{0}^tb^p(s;u_0,v_0)e^{\lambda
s}ds+w_0\right),
\end{equation*}
from where, imposing $w(T_c;w_0)=w_0$, we get that the initial
condition for a periodic orbit is
\begin{equation*}
w^p_0(u_0,v_0)=\frac{e^{-\lambda
 T_c}}{1-e^{-\lambda T_c}}\int_0^{T_c}b^p(s;u_0,v_0)e^{\lambda s}ds.
\end{equation*}
Hence, given $(u_0,v_0)$, the attracting periodic orbit of $w$ becomes
\begin{equation}
\begin{aligned}
w^p(t)&=e^{-\lambda t}\Bigg( \int_0^tb^p(s;u_0,v_0)e^{\lambda
s}ds\\
&\quad+\frac{e^{-\lambda
T_c}}{1-e^{-\lambda T_c}}\int_0^{ T_c}b^p(s;u_0,v_0)e^{\lambda
s}ds \Bigg).
\end{aligned}
\label{eq:w_po}
\end{equation}
Note that $w^p_0$ depends on $u_0$ and $v_0$ through the periodic
orbit~\eqref{eq:bp}.  Moreover, $w_0^p(u_0,v_0)$ is indeed a
parametrization of the whole periodic orbit $w^p(t)$: just by keeping
$t=0$ and varying $u_0,v_0$ along the periodic orbit $\Po^c$
$w_0^p(u_0,v_0)$ evolves along the periodic orbit $w^p(t)$.
Hence we have obtained the $3$-dimensional invariant manifold
\begin{equation*}
\tL=Q_0\times \left(\bigcup_{c_1\le c\le c_2}
\Big\{ \left(u,v,w\right)\,|\,\,\U(u,v)=c,\,w=w^p_0(u,v)
\Big\}
\right)\times\Tt\subset
\RR^5\times \Tt.
\end{equation*}
Recalling that $(u_0,v_0)$ can be parametrized by $\p(\theta,c)$ as in
Equation~\eqref{eq:change_variables},
$w_0^p$ can be as well parametrized by $\theta$ and $c$:
$w_0^p=w_0^p(\p(\theta,c))$. This induces a parametrization for $\tL$
\begin{equation*}
\tK:\T\times \RR\times\Tt\longrightarrow \RR^5\times
\Tt,
\end{equation*}
given by
\begin{equation*}
\tK(\theta,c,s)=
\left( 
\begin{array}[]{c}
0\\0\\
\p(\theta,c)\\
w^p_0(\p(\theta,c))\\s
\end{array}
 \right)
\end{equation*}
and hence
\begin{equation*}
\tL=\tK(\T,[c_1,c_2],\Tt).
\end{equation*}
The invariant manifold $\tL$ is foliated by $2$-dimensional invariant
tori contained at the energy level $c$:
\begin{equation*}
\tL=Q_0\times\bigcup_{c_1\le c\le c_2}\tL^c.
\end{equation*}
Each of these tori is homeomorphic to $\T\times\Tt$,
\begin{equation*}
\tL^c\simeq\T\times\Tt,
\end{equation*}
as it can be parametrized by $(\theta,s)$:
\begin{equation*}
\tL^c=\tK^c\left(\T,c,\Tt\right),
\end{equation*}
where
\begin{equation*}
\tK^c:\T\times\Tt\longrightarrow\RR^3\times\Tt
\end{equation*}
and
\begin{align*}
\tK^c(\theta,s)&=\Pi_{u,v,w,s}\left(\tK(\theta,c,s)\right)\\
&=(p(\theta,c),w_0^p(p(\theta,c)),s).
\end{align*}

We now show that the invariant manifold $\tL$ is normally hyperbolic
by showing that it has stable/unstable normal bundles with exponential
convergence/divergence.\\
By fixing coordinates $(\theta,c,s)$, we focus on a point at
the manifold $\tL$,
\begin{equation}
K(\theta,c,s)=\tz^b=(Q_0,u,v,w_0^p(u,v)),s)\in \tL\label{eq:restriction3},
\end{equation}
and we study its stable and unstable fibers.

Clearly, hyperbolicity is inherited from the hyperbolic point $Q_0$.
Hence, coordinates $(x,y)$ of points of the invariant fibers of
$\tz^b$ are given by the homoclinic loop of $Q_0$, parametrized by
$\tau$:
\begin{equation}
\sigma(\tau)\in \gamma.
\label{eq:restriction1}
\end{equation}
Coordinates $(u,v,s)$ need to be equal those of $\tz^b$ due to their
lack of hyperbolicity. So it only remains to find proper values of $w$
to define the stable and unstable fibers of $\tz^b$.

Letting $\varphi_\X(t;x,y)$ be the flow associated with the
Hamiltonian $\X$ we define
\begin{align*}
b^h(t;\tau,u,v)&=b\Big(\varphi_\X\big(t;\sigma(\tau)\big),\varphi_\U\big(t;u,v\big)\Big)\\
&=b(\sigma(\tau+t),\varphi(t;u,v)).
\end{align*}
The last equality holds due to condition given in
Equation~\eqref{eq:sigma_is_flow}.\\
For $(x,y)\in \gamma$, the variable $w$ evolves following the equation
\begin{equation*}
\dot{w}=-\lambda w+b^h(t;\tau,u,v),
\end{equation*}
which has the general solution
\begin{equation}
w(t;w_0)=e^{-\lambda t}\left(w_0+\int_{0}^tb^h\left(s;\tau,u,v)\right)e^{\lambda
s}ds\right).
\label{eq:sol_w_homo}
\end{equation}
Then, the values $w_0^s$ and $w_0^u$ that we are looking need to satisfy
\begin{equation*}
\lim_{t\to \infty}\left|w(t;w^s_0)-w(t;w_0^p)  \right|\longrightarrow 0
\end{equation*}
and 
\begin{equation*}
\lim_{t\to-\infty}\left|w(t;w^u_0)-w(t;w_0^p)  \right|\longrightarrow
0.
\end{equation*}
We define
\begin{equation*}
z(t;z_0)=w(t;w_0)-w(t;w_0^p),
\end{equation*}
with
\begin{equation*}
z_0=w_0-w^p_0.
\end{equation*}
Defining
\begin{equation*}
b^z(t;\tau,u,v)=b^h(t;\tau,u,v)-b^p(t;u,v),
\end{equation*}
$z(t)$ becomes
\begin{equation*}
z(t)=e^{-\lambda
t}\left(z_0+\int_{0}^tb^z(s;\tau,u,v)e^{\lambda
s}ds\right).
\end{equation*}
Due to the hyperbolicity of $Q_0$ and the fact that $b(x,y,u,v)$ is
continuous, we know that there exist positive constants $\tl$, $M$ and
$\delta$ such that
\begin{equation*}
\left| b^z(t;\tau,u,v)\right|<M e^{-\tl \left| t \right|}
\end{equation*}
if $\left| t \right|>\delta$.\\
On one hand, we get that
\begin{align*}
\lim _{t\to+\infty} \left|z(t)  \right|&=
\lim_{t\to+\infty}\left| e^{-\lambda
t}z_0+\int_0^tb^z(s;\tau,u,v)e^{\lambda
(s-t)}ds\right|\\
&<\lim _{t\to+\infty}M\int_0^t e^{-\tl s+\lambda (s- t)}ds=0,
\end{align*}
for any $z_0\in\RR$. As a consequence, all initial conditions
$w_{0}$ are attracted to the periodic orbit $w^p(t)$. Hence, the
stable fiber leaves $w$ free.\\
On the other hand, the limit for $t\to-\infty$ diverges unless we choose
\begin{equation*}
z_0(\tau,u,v)=\int_{-\infty}^0b^z(s;\tau,u,v)e^{\lambda s}ds.
\end{equation*}
In this case, we get
\begin{align*}
\lim_{t\to-\infty}\left| z(t) \right|&=\lim_{t\to-\infty}\left|
e^{-\lambda t}\Bigg(\int_{-\infty}^0e^{\lambda s}b^z(s)ds+\int_0^te^{\lambda
s}b^z(s)ds\Bigg)
\right|\\
&=\lim_{t\to-\infty}\left| e^{-\lambda t}\int_{-\infty}^te^{\lambda
s}b^z(s)ds\right|\\
&<\lim_{t\to-\infty}\left|e^{-\lambda t} \int_{-\infty}^t
M e^{(\lambda+\tl)s}ds \right|\\
&=\lim_{t\to-\infty}\frac{M e^{\tl t}}{\lambda+\tl}=0.
\end{align*}
Therefore, the unstable fiber of $\tz^b$ is given by points
$(\sigma(\tau),u,v,w_0^u,s)$ with
\begin{equation*}
\begin{aligned}
w_0^u(\tau,u,v)&=\int_{-\infty}^0b^h(s;\tau,u,s)e^{\lambda s}ds+w_0^p(u,v)\\
&=\int_{-\infty}^0\Big(b\big(\sigma(\tau+s),\varphi_\U(s;u,v)\big)\\
&\qquad-b(Q_0,\varphi_\U(s;u,v))\Big)e^{\lambda
s}ds+w_0^p(u,v).
\end{aligned}
\end{equation*}
\end{proof}
\begin{remark}
When $c$ is such that $T$ and $T_c$ are congruent, then $\tL^c$
is filled by periodic orbits: each point is a periodic point of the
$T$-time return map.  However, when $T$ and $T_c$ are
inconmensurable, $\tL^c$ is  densily filled by the trajecteory of any 
point at $\tL^c$.  Note that this implies that, for any
initial condition at the invariant manifold $\tL$, one obtains bounded
dynamics both for $t\to\infty$ and $t\to-\infty$.
\end{remark}
\begin{remark}
The manifolds $\W^s(\tL)$ and $\W^u(\tL)$ generate the normal bundle
to $\tL$, as they generate the $x-y$ plane and the stable manifold
contains the hyperplane $w$.
\end{remark}

\subsection{Persistence of manifolds}\label{sec:persistence}
In order to study the persistence of the manifold $\tL$ for
$\varepsilon>0$, we use theory for normally hyperbolic invariant
manifolds (\cite{Fen72}). However, due to the presence of dissipation
for $\nu>0$, the resulting manifold may lose some properties.
This is summarized in the following
\begin{proposition}
For $\varepsilon>0$ and some $c_0>0$, there exists a unique parameterization
\begin{equation*}
\tK_\varepsilon:\T\times[c_1,c_2]\times\Tt\longrightarrow\RR^5\times\Tt\label{eq:param_Keps}
\end{equation*}
with $0<c_0<c_1<c_2$, such that, the manifold
$\tL_\varepsilon=\tK_\varepsilon(\T,[c_1,c_2],\Tt)$ is unique,
normally hyperbolic and $\varepsilon$-close to $\tK_0=\tK$. Moreover,
\begin{enumerate}[i)]
\item if $\nu=0$, $\tL_\varepsilon$ is invariant and has
boundaries,
\item if $\nu>0$, the manifold
$\tK_\varepsilon(\T,(c_1,c_2),\Tt)$ is locally invariant.
\end{enumerate}
\label{pro:persistence}
\end{proposition}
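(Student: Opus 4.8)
The plan is to deduce the proposition from Fenichel's theory on persistence of normally hyperbolic invariant manifolds (\cite{Fen72}) applied to the manifold $\tL$ of Lemma~\ref{lem:NHIM_general}, paying attention to the facts that $\tL$ has a boundary and that the perturbation behaves differently for $\nu=0$ and $\nu>0$. First I would record that the hypotheses of the persistence theorem hold. By Lemma~\ref{lem:NHIM_general}, for $\varepsilon=0$ the set $\tL=\tK(\T,[c_1,c_2],\Tt)$ is a compact $3$-dimensional invariant manifold with boundary $\tK(\T,\{c_1,c_2\},\Tt)$, and by part (ii) of that lemma it is normally hyperbolic; its proof shows that the normal rates are governed by $\tl>0$ (inherited from the saddle $Q_0$) and $\lambda>0$ (the strongly contracting mode of the $w$-equation), whereas the restricted flow is the crossed product of the rigid rotations in $(\theta,s)$ and hence has vanishing tangential exponents. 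Thus $\tL$ is $r$-normally hyperbolic for every $r$, so the gap condition holds trivially; and, since $\X,\U,h,g,b$ are sufficiently smooth, the right-hand side of~\eqref{eq:pert_syst} is, for each fixed $\nu$, a $C^1$-small ($O(\varepsilon)$) perturbation of $g_0$ — the size of $\nu$ being irrelevant since it is a fixed constant.

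Consequently, for $\varepsilon$ small enough, Fenichel's theorem yields a locally invariant manifold $\tL_\varepsilon$, as smooth as the vector field permits, $O(\varepsilon)$-close to $\tL$, again normally hyperbolic (with rates $O(\varepsilon)$-close to $\tl$ and $\lambda$), unique inside a fixed isolating neighbourhood of $\tL$, and carrying stable and unstable manifolds $O(\varepsilon)$-close to $\W^s(\tL)$ and $\W^u(\tL)$. To produce $\tK_\varepsilon$, note that, being $O(\varepsilon)$-close to $\tL$, the manifold $\tL_\varepsilon$ is a graph over $\tL$ along the normal directions; composing $\tK$ with that graph map gives a map $\tK_\varepsilon:\T\times[c_1,c_2]\times\Tt\to\RR^5\times\Tt$ with $\tK_0=\tK$ and $\tK_\varepsilon-\tK=O(\varepsilon)$. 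After fixing the reparameterization freedom by a normalization (for instance requiring $\tK_\varepsilon$ to conjugate the inner dynamics to a prescribed form, or simply to agree with $\tK$ on the boundary), uniqueness of $\tL_\varepsilon$ inside the isolating neighbourhood upgrades to uniqueness of $\tK_\varepsilon$. One should also fix a $c_0>0$ below which the construction of Lemma~\ref{lem:NHIM_general} degenerates (where $\Po^c$, $T_c$ or $w^p_0$ fail to be defined, or where the twist is lost) and require $c_1>c_0$, so that the $O(\varepsilon)$ deformation keeps $\tL_\varepsilon$ inside the admissible region.

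It remains to separate the two regimes. When $\nu=0$ the perturbation keeps the $z$-block conservative and the boundary tori $\tK_\varepsilon(\T,\{c_i\},\Tt)$ can be singled out as invariant sets — they are reconstructed by repeating the argument of Lemma~\ref{lem:NHIM_general} with $Q_0$ replaced by the persisting hyperbolic periodic object and $\Po^{c_i}$ by its perturbed counterpart — so the manifold with boundary $\tL_\varepsilon$ is genuinely invariant. When $\nu>0$ the additional dissipative/forcing terms make $\dot c$ of order $O(\varepsilon\nu)$ on $\tL_\varepsilon$ near its boundary with no sign control, so orbits through boundary points generically leave $\tL_\varepsilon$; Fenichel's theorem then only guarantees that the open manifold $\tK_\varepsilon(\T,(c_1,c_2),\Tt)$ is locally invariant, i.e.\ every orbit on it remains on it until it possibly reaches the boundary.

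The delicate point I anticipate is precisely the boundary. Since $\tL$ is not closed, the textbook persistence statement does not apply verbatim: one has to run the overflowing/inflowing version of Fenichel's theorem, or first enlarge the interval $[c_1,c_2]$ to $[c_1-\delta,c_2+\delta]$, apply persistence there, and restrict afterwards — all while staying inside $\{c>c_0\}$. In the $\nu=0$ regime the extra task is to check that the perturbed boundary tori stay invariant, which is what upgrades local invariance to invariance with boundary; in the $\nu>0$ regime the statement is already the weaker, local, conclusion.
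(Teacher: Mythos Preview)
Your overall strategy---apply Fenichel's persistence theorem to the manifold $\tL$ of Lemma~\ref{lem:NHIM_general}, handle the boundary by an enlargement/cut-off on the $c$-interval, and then split into the two regimes---matches the paper's. The extension trick you sketch in the last paragraph is exactly what the paper does (extend to $[c_1',c_2']\supset[c_1,c_2]$, modify the field to vanish outside, obtain a genuinely invariant $\tL_\varepsilon'$, and restrict), and this is what secures uniqueness in the dissipative case.

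The gap is in your treatment of $\nu=0$. You claim that the boundary tori $\tK_\varepsilon(\T,\{c_i\},\Tt)$ persist ``by repeating the argument of Lemma~\ref{lem:NHIM_general} with $Q_0$ replaced by the persisting hyperbolic periodic object and $\Po^{c_i}$ by its perturbed counterpart''. But the periodic orbits $\Po^{c_i}$ are \emph{not} isolated or hyperbolic in the $\U$-system---they come in a one-parameter family---so there is no generic persistence result for an individual $\Po^{c_i}$ once the Hamiltonian coupling $\varepsilon h$ is switched on; the energy $c$ is no longer conserved and most tori break. What the paper does instead is observe that for $\nu=0$ the $z$-dynamics is Hamiltonian, so the inner map on $\tL_\varepsilon$ is symplectic, and then invoke KAM theory: if $c_1,c_2$ are chosen so that $T_{c_1}/T$ and $T_{c_2}/T$ are Diophantine, the corresponding tori survive as invariant KAM tori and furnish the boundaries of $\tL_\varepsilon$, making it truly invariant. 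Your argument needs this KAM step (and the accompanying Diophantine choice of $c_1,c_2$); without it the conclusion in~(i) does not follow.
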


\begin{proof}
For $\varepsilon>0$ theory of normally hyperbolic invariant manifolds
(\cite{Fen72}) guarantees that (locally) there exists
$\tL_\varepsilon$ $\varepsilon$-close to $\tL$, with $\tL_0=\tL$.\\
If $\nu=0$, the perturbation in System~\eqref{eq:pert_syst} becomes
Hamiltonian and, hence, the dynamics of the system restricted to
$\tL_\varepsilon$ (inner dynamics) becomes symplectic. In this case,
KAM theory (\cite{DelLla00}) provides the existence of invariant tori
in $\tL_\varepsilon$ bounding the inner dynamics. Assuming $c_1$ and
$c_2$ are chosen such that $T_{c_1}/T$ and $T_{c_2}/T$ are
diophantine, the manifold $\tL_\varepsilon$ has boundaries given by
the invariant tori $\tK_\varepsilon(\T,c_1,\Tt)$ and
$\tK_\varepsilon(\T,c_2,\Tt)$. As a consequence, the manifold is
invariant.

When the perturbation includes dissipative terms ($\nu>0$), the
existence of these boundaries is not guaranteed, as KAM tori are
generically destroyed (\cite{CalCelLla13}). Therefore, in this case,
$\tL_\varepsilon$ does not necessary possess boundaries.  However, we
show that it is unique. We consider $[c_1',c_2']$ such that $c_1'>0$
and $[c_1,c_2]\subset[c_1',c_2']$, and construct a new smooth field
$g'_\varepsilon(\tz)$ coinciding with~\eqref{eq:pert_syst}
for $c\in(c_1',c_2')$ and vanishing otherwise. This guarantees the
existence of a ``larger'' normally hyperbolic invariant manifold,
$\tL_\varepsilon'$, $\varepsilon$-close to
$\tK_0(\T,[c_1',c_2'],\Tt)$ and possessing boundaries. Therefore,
theory for normally hyperbolic invariant manifolds holds and
$\tL_\varepsilon'$ is unique and invariant. As
$\tL_\varepsilon\subset \tL_\varepsilon'$ and $g_\varepsilon'$
coincides with $g_0+\varepsilon (g_1+\nu g_2)$ in $\tL_\varepsilon$, the
manifold $\tL_\varepsilon$ is also unique.\\
Although, due to the dissipation, inner dynamics
contains attractors, trajectories may leave $\tL_\varepsilon$ both when
flown forwards or backwards in time.  This however occurs
slowly and points away from original boundaries $c=c_1$ and $c=c_2$
remain in $\tL_\varepsilon$ for large periods of time. Therefore, the manifold
$\tK_\varepsilon(\T,(c_1,c_2),\Tt)$ becomes only locally invariant.

\end{proof}
\begin{remark}
These parameterizations, together with the dynamics of the system 
restricted to $\tL_\varepsilon$ and linear approximations of the
manifolds $\W_\varepsilon^{s}(\tL_\varepsilon)$ and
$\W_\varepsilon^u(\tL_\varepsilon)$ will be numerically computed in
Section~\ref{sec:parametrization_method} by means of the
parameterizaiton method.
\end{remark}
\begin{remark}
The constant $c_0$ guarantees that $c_1$ is enough isolated from
$c=0$. If this does not occur, then the manifold $\tL_\varepsilon$ may
lose normal hyperbolicity, as the tangent dynamics start competing
with the normal ones when periodic orbits $\Po^c$ are too close to the
homoclinic loop. However, this loss of normal hyperbolicity can be
avoided by considering beams of different lengths leading to different
hyperbolic rates.
\end{remark}
\begin{remark}
From Proposition~\ref{pro:persistence} we also get parameterizations
for the stable and unstable manifolds of $\tL_\varepsilon$:
\begin{align}
\tW^{s,\pm}_\varepsilon&:\T\times[c_1,c_2]\times\Tt\times\RR^2\longrightarrow\RR^5\times\Tt\label{eq:param_Wseps}\\
\tW^{u,\pm}_\varepsilon&:\T\times[c_1,c_2]\times\Tt\times\RR\longrightarrow\RR^5\times\Tt\label{eq:param_Wueps}
\end{align}
\end{remark}
\subsection{Two-coupled piezoelectric oscillators}\label{sec:unper_syst}
In this section we apply the previous results to the case of two
coupled piezo-electric oscillators. We first write
System~\eqref{eq:model_spring} as in Equation~\eqref{eq:pert_syst}.\\
Let us assume that the damping ($\zeta$), the piezolectric
coupling ($\chi$) and  the elastic constant of the spring ($k$) are
small. We introduce scalings to write these parameters in terms of the
amplitude of the small forcing as follows
\begin{equation}
\zeta=\varepsilon\tzeta,\quad
\chi=\varepsilon\tc,\quad k=\varepsilon\tk.
\label{eq:scaling}
\end{equation}
The parameter $\nu$ in Equation~\eqref{eq:scaling} is not a real parameter of
the system but it is artificially introduced in order to  allow
distinguishing between a conservative case ($\nu=0$) and the general
dissipative one ($\nu>0$) so. This situation can be distinguished
between $\tzeta=\tc=0$ or $\tzeta>0$ and/or $\tc>0$. Therefore,
when dealing with the real model of coupled piezo-electric oscillators
we will ignore $\nu$.\\
The scalings~\eqref{eq:scaling} allow us to write
System~\eqref{eq:model_spring} in the perturbative form given in
Equations~\eqref{eq:hamiltonian_general}-\eqref{eq:hamiltonian_with_dissipation}
with
\begin{align}
\X(x,y)&=\frac{y^2}{2}-\frac{1}{4}x^2\left( 1-\frac{x^2}{2}
\right)\label{eq:X_piezo}\\
\U(u,v)&=\frac{v^2}{2}-\frac{1}{4}u^2\left( 1-\frac{u^2}{2}\right)\\
h(z,t)&=-\frac{\tk}{2}(u-x)^2-(x+u)G(t)\\
g(z)&=\tc w\\
b(z)&=-\kappa(y+v).\label{eq:b_piezo}
\end{align}
Note that System~\eqref{eq:model_spring} has been reduced to a first
order system be adding the variables $y=\dot{x}$ and $v=\dot{u}$.\\
In the autonomous and more compact form given in
Equation~\eqref{eq:pert_syst}, the functions $g_i$ become
\begin{equation}
g_0(\tz)=
\left( 
\begin{array}{c}
y\\
\frac{1}{2}x(1-x^2)\\ v \\ \frac{1}{2} u(1-u^2)\\-\lambda w-\kappa(y+v)\\1
\end{array}
 \right)
\label{eq:g0_piezo}
\end{equation}
\begin{equation}
g_1(\tz)=\left( 
\begin{array}[]{c}
0\\ -2\tzeta y+\tc w \\ 0\\ -2\tzeta v +\tc w\\0\\0
\end{array}
 \right)
\label{eq:perturbation}
\end{equation}
and
\begin{equation}
g_2(\tz)=\left( 
\begin{array}[]{c}
0\\ k(u-x)+G(s)\\0\\k(x-u)+G(s)\\0\\0
\end{array}
 \right).
\label{eq:g2_piezo}
\end{equation}

For $\varepsilon=0$, the $(x,y,u,v)$ system becomes two uncoupled and
unforced beams modeled by the Hamiltonians $\X$ and $\U$. In this
case, these Hamiltonians are equal, but satisfy conditions~\condsh{}.
The phase portrait for each Hamiltonian is shown in
Figure~\ref{fig:figure_eight} and consists of a figure of eight.
\begin{figure}
\begin{center}
\includegraphics[angle=-90,width=0.6\textwidth]{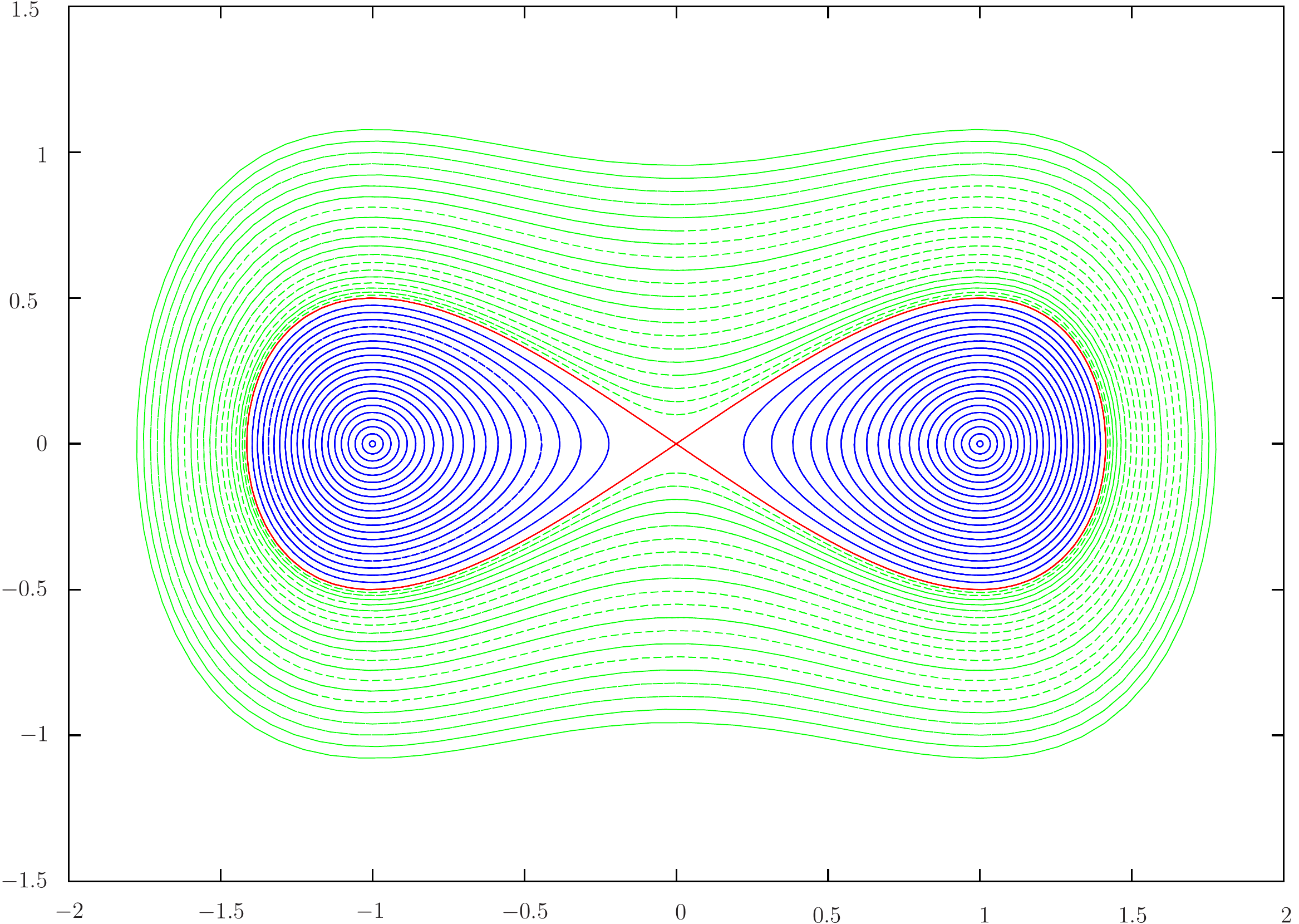}
\end{center}
\caption{Phase portrait of each beam in absence of forcing, damping
and coupling.}
\label{fig:figure_eight}
\end{figure}
It possesses three equilibrium points, two of the elliptic  type,
$Q^\pm=(\pm1,0)$, and a saddle point at the origin, $Q_0=(0,0)$. The
latter possesses two homoclinic loops,
$\gamma^\pm$, each surrounding the elliptic points $Q^\pm$,  and
located at the level $\left\{\X(x,y)=0\right\}$:
%
\begin{equation*}
\gamma^+\cup\left\{ Q_0\right\}\cup \gamma^-=\left\{ (x,y)\in\RR^2,\,\X(x,y)=0
\right\}.
\end{equation*}
Therefore, the Hamiltonian $\X$ satisfies condition {\em h1} where the
homoclinic loop $\gamma$ can be either $\gamma^+$ or $\gamma^-$, which
are parameterized by two different parameterizations $\sigma^\pm$
satisfying
\begin{equation}
\gamma^\pm=\left\{ \sigma^\pm(t),\,t\in\RR \right\}
\label{eq:param_homo_loop}
\end{equation}
\begin{align*}
&\lim_{t\to\pm\infty}\sigma^\pm(t)=Q_0\\
&\sigma^\pm(0)=(0,\pm\sqrt{2}).
\end{align*}
Regarding the Hamiltonian $\U$, there exist three regions where it is
integrable and satisfies condition {\em h2}. Two of these three
regions are the ones surrounded by the homoclinic loops $\gamma^\pm$
and containing the elliptic points $Q^\pm$, and satisfy $\left\{
-\frac{1}{8}<U(u,v)<0 \right\}$. The third region consists of the
outer part to homoclinic loops, given by $\left\{ U(u,v)>0
\right\}$. These three regions are covered by a continum of periodic
orbits with growing period as approaching the homoclinic loop, hence
satisfying condition {\em h2}.\\
From the applied point of view, we are interested on having as much
energy as possible. Therefore, we will focus on the higher energy
periodic orbits located in this third region, as they provide larger
amplitude oscillations. Similar invariant objects to the ones we will
construct in Section~\ref{sec:unperturbed_objects} can be obtained
when focusing on the other two regions surrounding each of the
elliptic points $Q^\pm$.

For $\varepsilon=0$, the parameterizations given in 
Lemma~\ref{lem:NHIM_general} become as follows. We first fix the
transversal section to the periodic orbits as $u=0$. Hence, we get
$(u_c,v_c)=(0,\sqrt{2c})$ and the action-angle-like parameterization
of the periodic orbits becomes
\begin{equation}
\begin{array}{cccc}
\p:&\T\times \RR&\longrightarrow&\RR^2\\
&(\theta,c)&\longmapsto&\varphi_\U(\theta T_c;0,\sqrt{2c}).
\end{array}
\label{eq:change_variables_piezo}
\end{equation}
Provided the form of the Hamiltonian $\U$, we can obtain expressions
for the periods of $\Po^c$ as follows. The periodic orbit with initial
condition $(0,\sqrt{2c})$ crosses the $u$ axis at the point
\begin{equation*}
\left( u_1, 0 \right)=\left( \sqrt{1+\sqrt{1+8c}},0 \right).
\end{equation*}
Using the symmetries of the system and its Hamiltonian structure, we
obtain that  the period of the periodic orbit $\Po^c$ becomes
\begin{equation}
\begin{aligned}
T_c:=&4 \int_0^{u_1}\frac{1}{\dot{u}}du\\
=&4\int_0^{u_1}\frac{1}{\sqrt{2c+\frac{u^2}{2}\left(
1-\frac{u^2}{2} \right)}}du.
\end{aligned}
\label{eq:periodsU}
\end{equation}
However, as it will be detailed in
Section~\ref{sec:unperturbed_bundles}, when numerically computed, it
becomes better to compute $T_c$ using a Newton method instead of
numerically computing the integral~\eqref{eq:periodsU}.\\
We also get more concrete expressions for the parameterization of
$\tL$ given in Lemma~\ref{lem:NHIM_general}. In partilar,
$w_0^p$ and $w_0^u$ become
\begin{align}
w^p_0(u,v)&=-\frac{e^{-\lambda
T_c}}{1-e^{-\lambda T_c}}\int_0^{T_c} \kappa
v^p(s;u,v)e^{\lambda s}ds\label{eq:w0p_piezo}\\
v^p(s;u,v)&=\Pi_v\left(\varphi_\U(s;u,v)\right),\nonumber
\end{align}
and 
\begin{align}
w^u_0(\tau,u,v)&=-\int_{-\infty}^0\kappa e^{\lambda s}y^h(s;\tau)ds +
w^p_0(u,v)
\label{eq:condini_wu_piezo}\\
y^h(s;\tau)&=\Pi_y\left(\sigma(\tau+s)\right)\nonumber,
\end{align}
where $\sigma$ can be either $\sigma^+$ or $\sigma^-$.

\section{Numerical framework for the parameterization method}\label{sec:parametrization_method}
In this section we present a method to numerically compute the
perturbed Normally Hyperbolic Manifold $\tL_\varepsilon$ by means of
the so-called parameterization method, introduced
in~\cite{CabFondlL03,CabFondlL03b,CabFondlL05}. This method is the one
presented in~\cite{HarCanFigLuqMon16}, Chapter~5.\\
The parameterization method is stated easier when formulated for maps;
hence, it will be more convenient to write the full
system~\eqref{eq:pert_syst} as discrete system. The most natural map
to consider is of course the stroboscopic map, due to the periodicity
of the system. However, due to the foliated nature of the unperturbed
manifold $\tL_0$, instead of using the original variables, we will
consider from now on the induced map in the manifolds introduced in
Section~\ref{sec:unper_syst}:
\begin{equation}
\begin{array}[]{cccc}
F_\varepsilon:&\T\times(0,\infty)\times\RR^3&\longrightarrow&\T\times(0,\infty)\times\RR^3\\
&(\theta,c,x,y,w)&\longmapsto&\tp\circ\stro_\varepsilon\circ\tp^{-1}(\theta,c,x,y,w),
\end{array}
\label{eq:full_map}
\end{equation}
where $\tp$ is the extended version of  the change of
variables~\eqref{eq:change_variables_piezo},
\begin{equation*}
\tp(\theta,c,x,y,w)=(x,y,p(\theta,c),w)=(x,y,\varphi_\U(\theta
T_c;u_c,v_c),w),
\end{equation*}
and $\stro_\varepsilon$ is the stroboscopic map
\begin{equation*}
\begin{array}{cccc}
\stro_\varepsilon:&\Sigma_{s}&\longrightarrow&\Sigma_{s+T}\\
&(x,y,u,v,w)&\longmapsto&\tphi_\varepsilon(s+T;x,y,u,v,w,s),
\end{array}
\end{equation*}
being $\tphi_\varepsilon$ the flow associated with the full
system~\eqref{eq:pert_syst}.\\
Arguing as in Section~\ref{sec:persistence}, by considering the flow
$g'$ we obtain a map $F'_\varepsilon$ and a
parameterization\footnote{The absence of tilde in the objects indicates
that time coordinate has been dropped and that these objects refer
to the discrete system.}
\begin{equation}
K_\varepsilon:\T\times[c_1',c_2']\longrightarrow\T\times
[c_1',c_2']\times\RR^3,
\label{eq:param_map}
\end{equation}
such that the manifold
\begin{equation*}
\K'_\varepsilon=K_\varepsilon(\T\times[c_1',c_2'])
\end{equation*}
is unique, normally hyperbolic and invariant under $F_\varepsilon'$.
Moreover, $F_\varepsilon'$ and $F_\varepsilon$ coincide in
$\K'_\varepsilon$.  By restricting the parameterization
$K_\varepsilon$ to $[c_1,c_2]$, we obtain a manifold,
\begin{equation*}
\K_\varepsilon=K_\varepsilon(\T,[c_1,c_2]),
\end{equation*}
which is unique and normally hyperbolic. Although it is locally
invariant, $\K_\varepsilon$ contains the image of those points
isolated enough from the boundaries of $\K_0$ ($c=c_1$ and
$c=c_2$).\\
The map $F_\varepsilon$ restricted to $K_\varepsilon$ induces inner
dynamics in $\K_\varepsilon$ given by a map
\begin{equation*}
f_\varepsilon:\T\times[c_1,c_2]\longrightarrow\T\times[c_1',c_2'].
\end{equation*}
We find the inner dynamics $f_\varepsilon$ and the parameterization
$K_\varepsilon$, using the parameterization, that is, by imposing that
the diagram 
\begin{equation*}
\begin{CD}
\T\times[c_1,c_2]@>K_\varepsilon>> \T\times[c_1,c_2]\times\RR^3\\
@VVf_\varepsilon V @VVF_\varepsilon V\\
\T\times[c_1',c_2']@>K_\varepsilon>> \T\times[c_1',c_2']\times\RR^3
\end{CD}
\end{equation*}
commutes.\\
Note that, although we do not have an explicit expression for the map
$F_\varepsilon$, we can consider that it is given provided that we can
numerically compute the stroboscopic map $\stro$ just by integrating
the system. Hence, we need to solve the cohomological
\begin{equation}
F_\varepsilon\circ K_\varepsilon-K_\varepsilon\circ f_\varepsilon=0
\label{eq:cohomo}
\end{equation}
for the unknowns $f_\varepsilon$ and $K_\varepsilon$. We do this by
following  the method described in~\cite{HarCanFigLuqMon16} (Chapter
5). It consists of taken advantage of the hyperbolicity of
$\K_\varepsilon$ to perform a Newton-like method to compute functions
$f_\varepsilon$ and $K_\varepsilon$. In practice, given a
discretization of the reference manifold $\Tt\times [c_1,c_2]$, this
means that we want to compute the coefficients for approximations
(splines, Fourier series or Lagrangian polynomials) of $f_\varepsilon$
and $K_\varepsilon$. We first review the method described
in~\cite{HarCanFigLuqMon16} adapted to our case.
\subsection{A Newton-like method}\label{sec:newton_method}
Assume that, for a certain $\varepsilon>0$, we have good enough
approximations of $K_\varepsilon$ and $f_\varepsilon$. As usual in
Newton-like methods, in order to provide improved approximations, we will
require as well an initial guess of the dynamics
at the tangent bundles; that is, approximations of the solutions to
the cohomological equation
\begin{equation}
DF_\varepsilon\left( K_\varepsilon\left( \theta,c \right)
\right)-DK_\varepsilon(f_\varepsilon(\theta,c))=0.
\label{eq:cohomo_tangent}
\end{equation}
Note that Equation~\eqref{eq:cohomo_tangent} manifests the invariance
of the tangent bundle $T\K_\varepsilon$ under $DF_\varepsilon$,
leading to inner dynamics at $T\K_\varepsilon$ given by
$Df_\varepsilon$. However, $DF_\varepsilon$ is a map onto the tangent
space $T\left(\T\times\RR\times\RR^3\right)$. The latter can be
generated by two vectors in $T\K_\varepsilon$ and three in the normal
bundle $N\K_\varepsilon$, $T\left( \T\times\RR\times\RR^3
\right)=T\K_\varepsilon\times N\K_\varepsilon$. Provided that
$\K_\varepsilon$ is normally hyperbolic, the normal space
$N\K_\varepsilon$ can be generated by two vectors  tangent to
$\W^s(\K_\varepsilon)$ and a third one tangent to $\W^u(\K_ề)$. It
will be hence useful to consider the adapted frame around
$\K_\varepsilon$
\begin{equation*}
P_\varepsilon(\theta,c)=\left( L_\varepsilon(\theta,c)\,N_\varepsilon(\theta,c) \right),
\end{equation*}
given by the juxtaposition of the matrices $L_\varepsilon$ and
$N_\varepsilon$, where
\begin{equation*}
L_\varepsilon(\theta,c)=DK_\varepsilon(\theta,c)
\end{equation*}
is a $5\times2$ matrix and $N_\varepsilon(\theta,c)$ is $5\times3$ given by the three
columns of $DF_\varepsilon$ generating the normal bundle
$N\K_\varepsilon$.\\
The matrix $P_\varepsilon$ can be seen as a change of basis such that
the skew product
\begin{equation*}
(f_\varepsilon,\Lambda_\varepsilon):\T\times[c_1,c_2]
\times\RR^5
\longrightarrow \T\times[c_1,c_2]\times\RR^5,
\end{equation*}
with
\begin{equation*}
\Lambda_\varepsilon=P_\varepsilon(f(\theta,c))^{-1}DF_\varepsilon(K_\varepsilon(\theta,c))P_\varepsilon(\theta,c),
\end{equation*}
becomes of the form
\begin{equation*}
\Lambda_\varepsilon=\left( 
\begin{array}[]{ccccc}
\multicolumn{2}{c}{\multirow{2}{*}{$\Lambda^L_\varepsilon$}}&0&0&0\\
&&0&0&0\\
0&0&\multicolumn{2}{c}{\multirow{2}{*}{$\Lambda^S_\varepsilon$}}&0\\
0&0&&&0\\
0&0&0&0&\Lambda^U_\varepsilon
\end{array}
 \right).
\end{equation*}
Note that
\begin{equation*}
\Lambda^L_\varepsilon=Df_\varepsilon.
\end{equation*}
The Newton-like method consists of two steps. Given approximations of
$K_\varepsilon$ (and hence $L_\varepsilon=DK_\varepsilon$),
$f_\varepsilon$ (and hence $\Lambda^L_\varepsilon=Df_\varepsilon$),
$\Lambda^S_\varepsilon$ and $\Lambda^N_\varepsilon$, in the first
step, one computes new corrected versions of $K_\varepsilon$ and
$f_\varepsilon$ (and hence corrected versions of $L_\varepsilon$ and
$\Lambda^L_\varepsilon$). In the second step, one corrects the normal
bundle $N_\varepsilon$ and its linearized dynamics
$\Lambda^N_\varepsilon$.
\subsubsection{First step: correction of the manifold and the inner dynamics}\label{sec:step1}
As in~\cite{HarCanFigLuqMon16}, we consider corrections of the form
\begin{align*}
\bar{f}_\varepsilon&=f_\varepsilon+\Delta f_\varepsilon\\
\bar{K}_\varepsilon&=K_\varepsilon+\Delta K_\varepsilon,
\end{align*}
with\footnote{We permit ourselves here to keep the same notation as in
the literature and call this correction $\zeta$. Although this
coincides with the with the damping parameter from the original
system~\eqref{eq:model_spring}, we believe that it will be
clear from the context what we are referring to.}
\begin{equation*}
\Delta K_\varepsilon=P_\varepsilon(\theta,c) \zeta(\theta,c).
\end{equation*}
We want to compute $\zeta(\theta,c)$ and $\Delta
f_\varepsilon(\theta,c)$.\\
The corrections of the manifold, $\zeta(\theta,c)$, can be split in
tangent, stable and unstable directions:
\begin{equation*}
\zeta(\theta,c)=\left( 
\begin{array}[]{c}
\zeta^L(\theta,c)\\\zeta^S(\theta,c)\\\zeta^U(\theta,c)
\end{array}
 \right)\in \RR^2\times\RR^2\times\RR.
\end{equation*}
Let $E(\theta,c)$ be the error at Equation~\eqref{eq:cohomo}
at the current value $f_\varepsilon$ and $K_\varepsilon$,
\begin{equation*}
E(\theta,c)=F_\varepsilon(K_\varepsilon(\theta,c))-K_\varepsilon(f_\varepsilon(\theta,c)).
\end{equation*}
Let
\begin{equation*}
\eta(\theta,c)=-\left( P(f(\theta,c)) \right)^{-1}E(\theta,c),
\end{equation*}
which, again, we split in tangent, stable and unstable directions
\begin{equation*}
\eta(\theta,c)=\left( 
\begin{array}[]{c}
\eta^L(\theta,c)\\\eta^S(\theta,c)\\\eta^U(\theta,c)
\end{array}
 \right).
\end{equation*}
Then, assuming that
\begin{equation*}
\zeta^L(\theta,c)=\left( 
\begin{array}[]{c}
0\\0
\end{array}
 \right),
\end{equation*}
that is, the manifold $K_\varepsilon(\T,[c_1,c_2])$ is only corrected
in the normal directions, $\Delta f_\varepsilon$ becomes
\begin{equation*}
\Delta f_\varepsilon(\theta,c)=-\eta^L(\theta,c)
\end{equation*}
(see~\cite{HarCanFigLuqMon16} for more details).\\
Regarding $\Delta K_\varepsilon$, by expanding
Equation~\eqref{eq:cohomo}, one gets that $\zeta^S$ and $\zeta^U$
satisfy the equations
\begin{align}
\eta^S(\theta,c)&=\Lambda^S_\varepsilon(\theta,c)\zeta^S(\theta,c)-\zeta^S(\theta,c)\label{eq:zetaS}\\
\eta^U(\theta,c)&=\Lambda^U_\varepsilon(\theta,c)\zeta^U(\theta,c)-\zeta^U(\theta,c)\label{eq:zetaU}.
\end{align}
Due to hyperbolicity, $\Lambda^U_\varepsilon>1$ and the eigenvalues of
$\Lambda^S_\varepsilon$ are real, positive and inside the unite
circle. Hence, Equations~\eqref{eq:zetaS}-\eqref{eq:zetaU} can be
solved by iterating the systems
\begin{align*}
\zeta^S(\theta,c)&=\Lambda^S_\varepsilon(f^{-1}(\theta,c))\zeta^S(f^{-1}(\theta,c))-\eta^S(f^{-1}(\theta,c))\\
\zeta^U(\theta,c)&=\frac{\zeta^U(f(\theta,c))+\eta^U(\theta,c)}{\Lambda^U_\varepsilon(\theta,c)}.
\end{align*}
\subsubsection{Second step: normal bundle correction}\label{sec:step2}
Once we have new corrected versions of the inner dynamics
$f_\varepsilon(\theta,c)$ (and consequently
$\Lambda^L_\varepsilon(\theta,c)$) and the parametrization
$K_\varepsilon(\theta,c)$ (and consequently $L_\varepsilon(\theta,c)$), the second
step consists of computing new approximations for
$\Lambda^S_\varepsilon$, $\Lambda^U_\varepsilon$ and
$N_\varepsilon(\theta,c)$. As in~\cite{HarCanFigLuqMon16}, we consider
approximations of the form
\begin{align*}
\bar{N}_\varepsilon(\theta,c)&=N_\varepsilon(\theta,c)+\Delta
N(\theta,c)\\
\bar{\Lambda}^S_\varepsilon(\theta,c)&=\Lambda^S_\varepsilon(\theta,c)+\Delta\Lambda^S(\theta,c)\\
\bar{\Lambda}^U_\varepsilon(\theta,c)&=\Lambda^U_\varepsilon(\theta,c)+\Delta\Lambda^U(\theta,c),
\end{align*}
with
\begin{equation*}
\Delta N(\theta,c)=P_\varepsilon(\theta,c)Q^N(\theta,c).
\end{equation*}
Assuming that the corrections of the linearised stable and unstable
bundles are applied only in the complementary directions, one gets
that $Q^N(\theta,c)$ is of the form
\begin{equation*}
Q^N(\theta,c)=
\left( 
\begin{array}[]{ccc}
\multicolumn{2}{c}{\multirow{2}{*}{$Q^{LS}(\theta,c)$}}&\multirow{2}{*}{$Q^{LU}(\theta,c)$}\\
&&\\
0&0&\multirow{2}{*}{$Q^{SU}(\theta,c)$}\\
0&0&\\
\multicolumn{2}{c}{Q^{US}(\theta,c)}&0
\end{array}
 \right).
\end{equation*}
Let us write the current error in the cohomological
equation at the tangent bundle (Equation~\eqref{eq:cohomo_tangent}) as
\begin{align*}
\Ered^N(\theta,c)&=P_\varepsilon(\theta,c)^{-1}DF_\varepsilon\left(
K_\varepsilon(\theta,c) \right)N_\varepsilon(\theta,c)-\\
&-\left( 
\begin{array}[]{ccc}
0&0&0\\
0&0&0\\
\multicolumn{2}{c}{\multirow{2}{*}{$\Lambda_\varepsilon^S(\theta,c)$}}&0\\
&&0\\
0&0&\Lambda^U_\varepsilon(\theta,c)
\end{array}
\right)=\left( 
\begin{array}[]{ccc}
\multicolumn{2}{c}{\multirow{2}{*}{$\Ered^{LS}(\theta,c)$}}&\multirow{2}{*}{$\Ered^{LU}(\theta,c)$}\\
&&\\
\multicolumn{2}{c}{\multirow{2}{*}{$\Ered^{SS}(\theta,c)$}}&\multirow{2}{*}{$\Ered^{SU}(\theta,c)$}\\
&&\\
\multicolumn{2}{c}{\Ered^{US}(\theta,c)}&\Ered^{UU}(\theta,c)
\end{array}
 \right)
\end{align*}

On one hand, the corrections of the adapted frame in the normal
directions become
\begin{align*}
\Delta\Lambda^S(\theta,c)&=\Ered^{SS}(\theta,c)\\
\Delta\Lambda^U(\theta,c)&=\Ered^{UU}(\theta,c).
\end{align*}
On the other hand, the corrections of normal part of the change of basis
$P_\varepsilon(\theta,c)$, are obtained by iterating the systems
\begin{align*}
Q^{LS}(\theta,c)&=\left( \Lambda^L_\varepsilon(\theta,c) \right)\left(
Q^{LS}(f(\theta,c))\Lambda^S_\varepsilon(\theta,c)-\Ered^{LS}(\theta,c)
\right)\\
Q^{LU}(\theta,c)&=\frac{\Lambda_\varepsilon^L(f^{-1}(\theta,c))+\Ered^{LU}(f^{-1}(\theta,c))}{\Lambda^U_\varepsilon(f^{-1}(\theta,c))}\\
Q^{US}(\theta,c)&=\frac{Q^{US}(f(\theta,c))\Lambda^S_\varepsilon(\theta,c)-\Ered^{US}(\theta,c)}{\Lambda^U_\varepsilon(\theta,c)}\\
Q^{SU}(\theta,c)&=\frac{\Lambda^S_\varepsilon(f^{-1}(\theta,c))Q^{SU}(f^{-1}(\theta,c))+\Ered^{SU}(f^{-1}(\theta,c))}{\Lambda^U_\varepsilon(f^{-1}(\theta,c))}.
\end{align*}
\subsection{Computation of bundles, maps and frames for the unperturbed piezoelectric system}\label{sec:unperturbed_bundles}
We now derive semi-explicit expressions for the objects for
$\varepsilon=0$ for system~\eqref{eq:pert_syst} with $g_i$ given in
Equations~\eqref{eq:g0_piezo}-\eqref{eq:g2_piezo}: $f_0(\theta,c)$,
$F_0(\theta,c,x,y,w)$,\linebreak $DF_0(\theta,c)$ and $P_0(\theta,c)$. The
expressions below have to be partially solved numerically. By
semi-explicit we mean that we will assume that computations such us
numerical integration or differentiation is exact.

For $\varepsilon=0$, the map $F_0(\theta,c,x,y,w)$ consists of
computing the stroboscopic map, integrating
system~\eqref{eq:unpert_syst}, from $t_0$ to $t_0+T$.  However, we
first need to compute the change of variables $p(\theta,c)=(u,v)$,
which requires the computation of $T_c$.  The expression in
Equation~\eqref{eq:periodsU} implies computing an improper integral
provided that $\dot{u}=0$ at $u=u_1$, which is numerically
problematic. Instead, we perform a Newton method to find the smallest
$t^*>0$ such that
\begin{equation*}
\Pi_u\left(\varphi_\U(t^*;0,\sqrt{2c})\right)=0.
\end{equation*}
Then $T_c=2t^*$ due to the symmetry of the system. This is given
by iterating the system
\begin{equation*}
t_{i+1}=t_i -\frac{\Pi_u\left( \varphi_\U(t_i;0,\sqrt{2c})
\right)}{\Pi_v\left( \varphi_\U(t_i;0,\sqrt{2c}) \right)},
\end{equation*}
which allows to obtain a very accurate solution (precision around
$10^{-12}$ using order $7--8th$ order Runge Kutta integrator) in few
iterations assuming that a good enough first guess is provided.\\
For $\varepsilon=0$, the system is autonomous and the stroboscopic map
does not depend on $t_0$; it becomes
\begin{equation*}
\stro_0(x,y,p(\theta,c),w)=\left( 
\begin{array}[]{c}
\varphi_\X(T;x,y)\\\varphi_\U(\theta T_c+T;0,\sqrt{2c})\\w(T;w)
\end{array}
 \right),
\end{equation*}
where $w(t;w)$ is the solution of equation
\begin{equation}
\dot{w}=-\lambda w-k(\Pi_y\left( \varphi_\X(t;x,y)
\right)+\Pi_v(\varphi_\U(\tau;p(\theta,c)))).
\label{eq:w-equation}
\end{equation}
Recall that, as mentioned in Section~\ref{sec:unper_syst},
Equation~\eqref{eq:w-equation} can be seen as a one-dimensional
non-autonomous differential equation, assuming that the flows
$\varphi_\X$ and $\varphi_\U$ are known, but indeed depends on the
initial values $x,y,u,v$.\\
Writing $\stro_0$ in variables $\theta,c$, we obtain the map $F_0$
\begin{equation*}
F_0(\theta,c,x,y,w)=\left( 
\begin{array}[]{c}
\theta+\frac{T}{T_c}\\c\\\varphi_\X(T;x,y)\\w(T;w)
\end{array}
 \right),
\end{equation*}
and we assume that we can integrate system $\X$ and
Equation~\eqref{eq:w-equation} ``exactly''.\\
For $\varepsilon=0$ the parameterization $K_0(\theta,c)$ becomes
\begin{equation*}
K_0(\theta,c)=\left( 
\begin{array}[]{c}
\theta\\c\\0\\0\\w^p_0(p(\theta,c))
\end{array}
 \right).
\end{equation*}
Again, we need to compute $w^p_0(p(\theta,c))$ numerically. One option
is to numerically perform the integral given in
Equation~\eqref{eq:w0p}.  However, it becomes faster an more precise
to compute $w^p_0(p(\theta,c))$ as the solution of a fixed point
equation. Recall that, when restricted to the manifold $\tK_0$,
$(x,y)$ are kept constant to $(0,0)$, and hence the dynamics is given
by system $\U$ and $\dot{w}$ as given in Equation~\eqref{eq:bp}, which
we recall here for commodity 
\begin{equation}
\left.
\begin{aligned}
\dot{u}&=v\\
\dot{v}&=\frac{1}{2}u \left( 1-u^2 \right)\\
\dot{w}&=\lambda w -k v
\end{aligned}
\right\}.
\label{eq:system_uvw}
\end{equation}
Let $\varphi_{\U w}(t;u,v,w)$ be the flow associated with
system~\eqref{eq:system_uvw}. Then, $w^p_0(p(\theta,c))$ is the
solution 
 for $w_0$ of the fixed point
equation
\begin{equation}
\Pi_w\left( \varphi_{\U w}(T_c;p(\theta,c),w_0) \right)-w_0=0,
\label{eq:fixed_point_w0}
\end{equation}
which we can solve using a Newton method.  Provided that $T_c$
does not depend on $w_0$, the derivative $\partial_{w_0} \Pi_w\left(
\varphi_{\U x}(\alpha;p(\theta,c),w_0) \right)$, necessary for the
Newton method, can be obtained by integrating the variational
equations of system~\eqref{eq:system_uvw} from $t=0$ to $t=T_c$.

We next get 
\begin{equation*}
L_0(\theta,c)=DK_0(\theta,c)=\left( 
\begin{array}[]{cc}
1&0\\ 0&1\\0&0\\0&0\\ \frac{\partial
w^p_0(\theta,c)}{\partial\theta}&\frac{\partial
w_0^p(\theta,c)}{\partial c}
\end{array}
 \right),
\end{equation*}
and we need to numerically compute the last row. Assuming that we have
obtained $w^p_0(p(\theta,c))$, this can be done by applying the
implicit function theorem to Equation~\eqref{eq:fixed_point_w0}, which
leads to
\begin{align*}
\frac{\partial w^p_0(p(\theta,c))}{\partial \theta}&=
-\frac{\overbrace{D_{uv}\Pi_w\left( \varphi_{\U w}(T_c;u,v,w_0^p(u,v))
\right)_{(u,v)=p(\theta,c)}}^{*}\cdot D_\theta
p(\theta,c)}{\underbrace{\partial_{w_0}\Pi_w\left( \varphi_{\U
w}(T_c;p(\theta,c),w_0) \right)_{w_0=w_0^p(\theta,c)}}_*-1}\\
\frac{\partial w^p_0(p(\theta,c))}{\partial c}&=
-\frac{\overbrace{\Pi_w\left( \varphi_{\U
w}'(T_c;p(\theta,c),w^p_0(p(\theta,c)))
\right)}^{-\lambda w^p_0(p(\theta,c))-k \Pi_v(p(\theta,c))}\cdot\alpha'(c)}{\partial _{w_0}\Pi_w\left(\varphi_{\U w}\left(
T_c;p(\theta,c),w_0
\right)  \right)_{w_0=w_0^p(\theta,c)}-1}-\\
&\quad -\frac{\overbrace{D_{uv}\varphi_{\U
w}(T_c;u,v,w^p_0(p(\theta,c)))_{(u,v)=p(\theta,c)}}^*\cdot
D_cp(\theta,c)}{\underbrace{\partial _{w_0}\Pi_w\left(\varphi_{\U w}\left(
T_c;p(\theta,c),w_0
\right)  \right)_{w_0=w_0^p(\theta,c)}}_{*}-1}.
\end{align*}
The terms labeled with $*$ can be obtained by integrating the
variational equations of system~\eqref{eq:system_uvw}, so we still
need to obtain $\alpha'(c)$ and $D_{\theta c}p(\theta,c)$. The former
one is computed by finite differences provided that we can accurately
compute $\alpha_{c+h}$ and $\alpha_{c-h}$ for a small enough $h>0$.
The latter becomes
\begin{align*}
Dp(\theta,c)&=D_{\theta,c}\varphi_\U(\theta T_c;0,\sqrt{2c})\\
&=D_{uv}\varphi_\U(\theta T_c;u,v)_{(u,v)=p(\theta,c)}\cdot \left( 
\begin{array}[]{cc}
0&0\\ 0& \frac{1}{\sqrt{2c}}
\end{array}
 \right),
\end{align*}
where $D_{uv}\varphi_\U(\theta T_c;u,v)$ can be obtained
integrating the variational equations associated with system $\U$.

We now compute
\begin{equation*}
DF_0(\theta,c,x,y,w)=\left( 
\begin{array}[]{ccccc}
-\frac{T}{T_c^2}\alpha'(c) & 1 &0&0&0\\
0&1&0&0&0\\
0&0&\multicolumn{2}{c}{\multirow{2}{*}{$D_{xy}\varphi_\X(T;x,y)$}}&0\\
0&0&&&0\\
\partial_{\theta}w'&\partial_{c}w'&\partial_{x}w'
&\partial_{y}w'&\partial_{w}w'
\end{array}
 \right),
\end{equation*}
where $w'=w(2\pi/\omega;w)$ is the solution of
Equation~\eqref{eq:w-equation} which, as emphasized above, depends
also on $\theta,c,x,y$. The first element of $DF_0$ requires computing
$\alpha'(c)$, which we have already seen. The rest of the elements of
$DF_0$ can be computed  by integrating the full unperturbed
system~\eqref{eq:unpert_syst} together wit its variational equations.

When evaluated at the manifold  $\K_0$, the eigenvectors of $DF_0$
provide proper directions to split the normal space in stable and
unstable directions and hence to obtain the matrix $N_0(\theta,c)$.\\
As shown in Lemma~\eqref{lem:NHIM_general}, at each point of the manifold
$\tK_0$ (similarly for $\K_0$)  the normal bundle is split in two
stable directions and an unstable one. One stable direction is given
by the contraction in $w$; the other two are given by the stable and
unstable directions of the saddle point $Q_0$ of system $\X$. These
directions are given by the eigenvectors of matrix
$DF_0(\tK_0)$
\begin{align*}
v_1^s&=\left( 0,0,1,-1/\sqrt{2},P_{5,3} \right)\\
v_2^s&=\left( 0,0,0,0,1 \right)\\
v^u&=\left( 0,0,1,1/\sqrt{2},P_{5,5} \right),
\end{align*}
with
\begin{align*}
P_{5,3}&=\frac{1/\sqrt{2}\partial_yw'-\partial_xw'}{\partial_ww'+1/\sqrt{2}}\\
P_{5,5}&=-\frac{1/\sqrt{2}\partial_yw'+\partial_xw'}{\partial_ww'-1\sqrt{2}}.
\end{align*}
Hence, $N_0(\theta,c)$ becomes
\begin{equation*}
N_0(\theta,c)=\left( 
\begin{array}[]{ccc}
0&0&0\\0&0&0\\ 1&0&1\\
-1/\sqrt{2}&0&1/\sqrt{2}\\
P_{5,3}&1&P_{5,5}
\end{array}
 \right).
\end{equation*}
\section{Numerical results}\label{sec:numerical_results}
We apply the method described in~\cite{HarCanFigLuqMon16} (summarized
in Sections~\ref{sec:newton_method}-\ref{sec:unperturbed_bundles}) to
the map given in Equation~\eqref{eq:full_map} and corresponding to the
stroboscopic map of System~\eqref{eq:pert_syst} with $\X$, $\U$, $h$,
$g$ and $b$ given in Equations~\eqref{eq:X_piezo}--\eqref{eq:b_piezo}.
We hence will obtain numerical computations of the discrete versions
of the Normally Hyperbolic Manifold $\tL_\varepsilon$ and its
associated invariant manifolds $\W^s(\tL_\varepsilon)$ and
$\W^u(\tL_\varepsilon)$. We fix the following parameter values
\begin{align*}
G(t)&=\sin(\omega t)&\omega&=2.1\\
\lambda&=0.02&\tk=\kappa&=1
\end{align*}
and consider different situations regarding parameters $\varepsilon$,
$\tzeta$ and $\tc$.  For each of them, we use as seed for the Newton
method the unperturbed setting shown in
Section~\ref{sec:unperturbed_bundles} and use cubic spline
interpolations in a grid of points for
$(\theta,c)\in\T\times[0.1,1.4]$. At each Newton step, the two
substeps explained in Sections~\ref{sec:step1} and~\ref{sec:step2} are
performed for each point of the grid, which allows a natural
parallelization. This is done using {\em OpenMP} libraries and, ran in
a 8 cores node with multithreading (16 threads), each Newton step
takes around 2 minutes for a grid of $500\times 200$ points. The code
is available at
\begin{center}
\url{https://github.com/a-granados/nhim_parameterization}
\end{center}
\subsection{Conservative case}\label{sec:numerical_conservative}
We start by setting $\tzeta=\tc=0$ and $\varepsilon=6\cdot10^{-2}$.
In this case, only the conservative terms of the coupling between the
oscillators remain active, as $\tk=1$. The inner dynamics, restricted
to $\tL_\varepsilon$, is given by the one and a half degrees of
freedom Hamiltonian system
\begin{equation*}
\U(u,v)+\varepsilon
h\left(\Pi_{x,y,u,v,s}\left(\tK_\varepsilon\left(p^{-1}_\varepsilon(u,v),s\right)\right)\right),
\end{equation*}
 and therefore $f_\varepsilon$ becomes a symplectic
map.
\begin{figure}
\begin{center}
\includegraphics[angle=-90,width=0.8\textwidth]{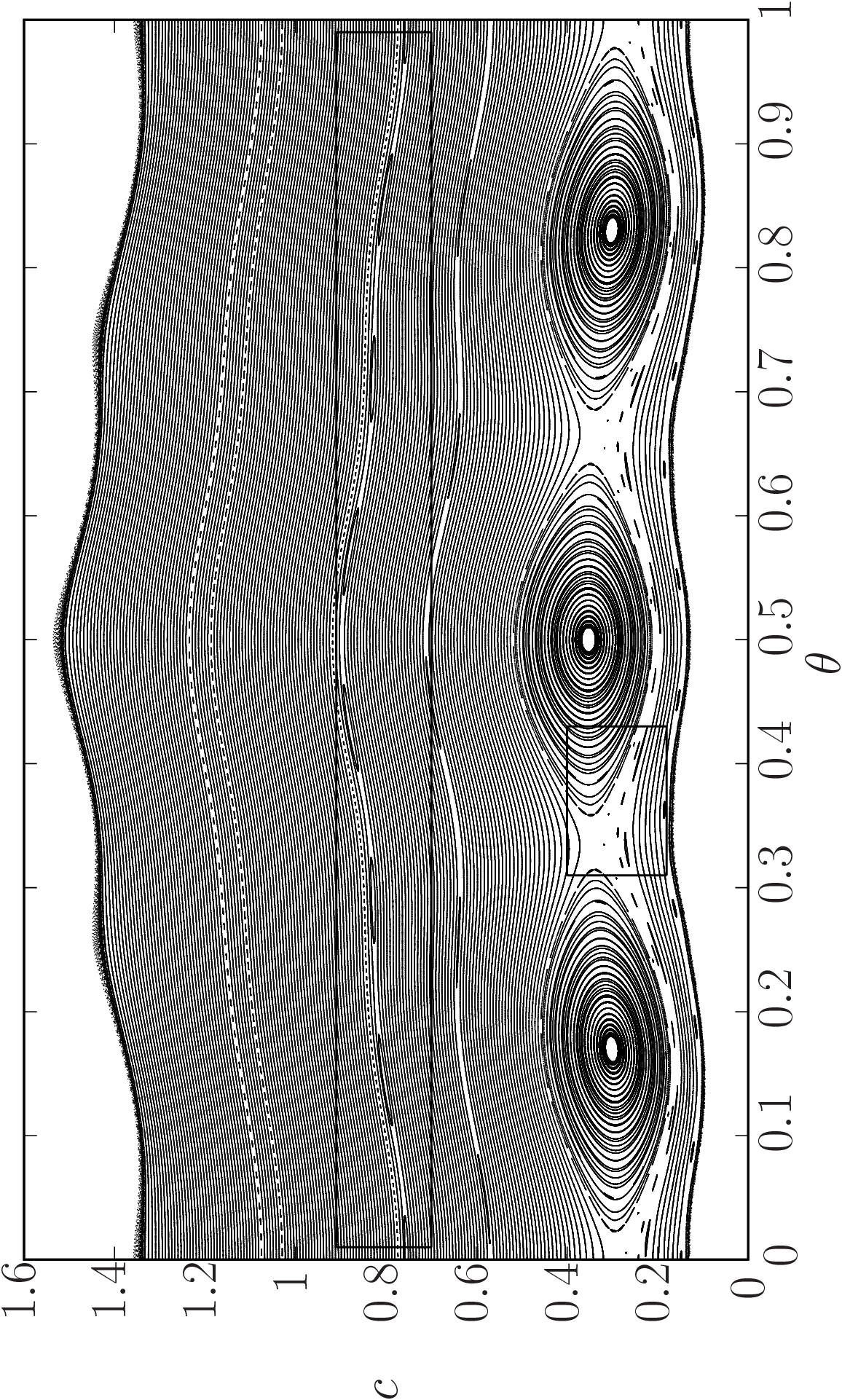}
\end{center}
\caption{Inner dynamics in $\K_\varepsilon$ for the conservative case
with $\varepsilon=6\cdot10^{-2}$ and $\tzeta=\tc=0$, obtained by iterating the
numerically obtained map $f_\varepsilon(\theta,c)$. The labeled
rectangles are magnified in Figures~\ref{fig:3:1_resonance}
and~\ref{fig:7:3_resonance}.}
\label{fig:conser_thetac}
\end{figure}
In Figure~\ref{fig:conser_thetac} we show the global picture of the
inner dynamics, where one can see typical objects of this type of
maps. The space is mostly covered by KAM invariant curves acting as
energy bounds. For the chosen value of $\omega$ one observes three
main resonances: $3:1$, $5:2$ and $7:3$, where $m:n$ means $mT=nT_c$.
\begin{figure}
\begin{center}
\includegraphics[angle=-90,width=0.8\textwidth]{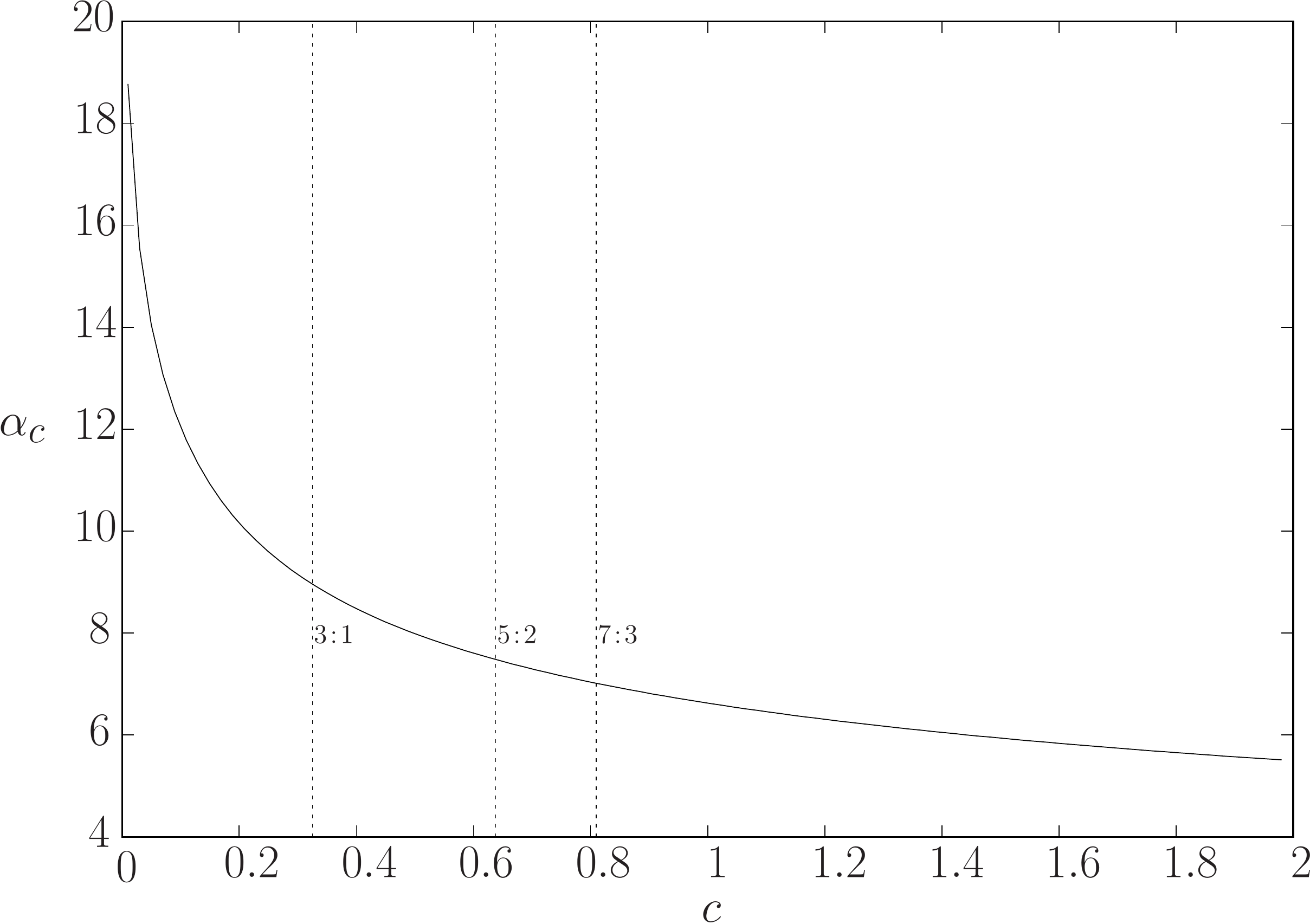}
\end{center}
\caption{Periods of the unperturbed system, $T_c$.}
\label{fig:unpert_periods}
\end{figure}
These resonances are labeled in Figure~\ref{fig:unpert_periods}, where
we show the periods ($T_c$) of the unperturbed system as a function of
$c$, and they approximately correspond to the unperturbed periodic
orbits $\Po_{0.325}$, $\Po_{0.638}$ and $\Po_{0.81}$ defined in
Equation~\eqref{eq:periodic_orbits}, respectively. As it comes from
Melnikov theory  (see~\cite{GucHol83}) for subharmonics orbits, when
they persist, one finds an even number of periodic orbits of the
stroboscopic map; half are of the saddle and the rest are elliptic;
this number is given by the number of simple zeros of the so-called
Melnikov function for subharmonic periodic orbits.  Recalling that we
are dealing with the conservative case ($\varepsilon>0$,
$\tzeta=\tc=0$), in our case this function becomes
\begin{equation}
M(t_0)=\int_0^{mT}v(t)\left(\tk u(t)+G(t+t_0)\right)dt,
\label{eq:melnikov_function}
\end{equation}
where $(u(t),v(t))=\varphi_\U(t;0,\sqrt{2c})$ is evaluated along the
unperturbed periodic orbit, $\Po^c$, with initial condition at $u=0$
and satisfying $T_c=mT/n$.
\begin{figure}
\begin{center}
\includegraphics[angle=-90,width=0.8\textwidth]{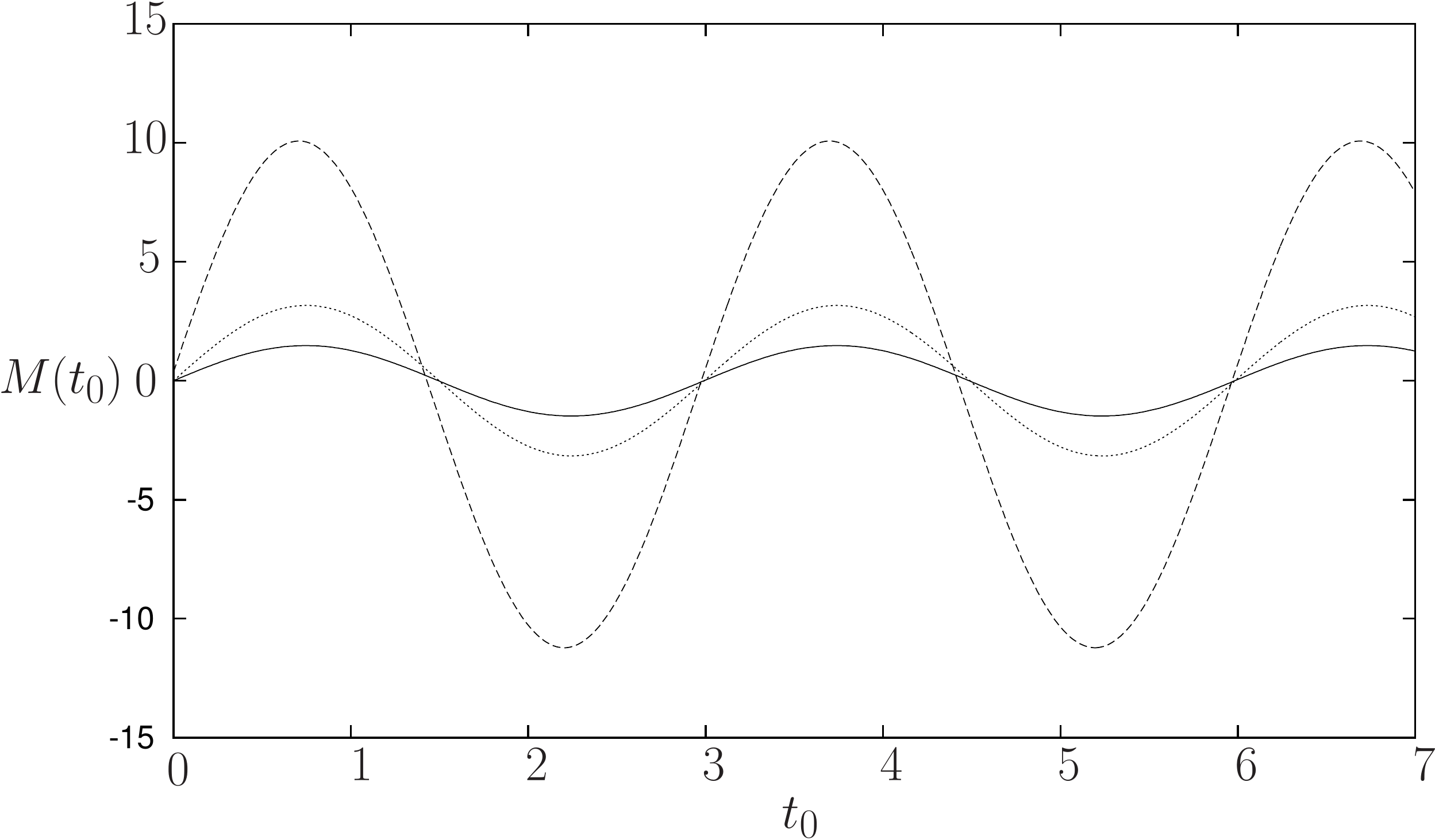}
\end{center}
\caption{Melnikov functions for the resonances $3:1$ (solid), $7:3$
(dashed, magnified by a factor of $10^2$) and $5:2$ (pointed, magnified
by a factor of $10^5$).}
\label{fig:melnikov_functions}
\end{figure}
In Figure~\ref{fig:melnikov_functions} we show such function for these
three resonances. Each of them possesses two simple zeros and, hence,
there exist (for $\varepsilon>0$ small enough) two periodic points of
the stroboscopic map of the saddle and elliptic type. By adding higher
harmonics to $G(t)$, function $M(t_0)$ may possess more simple zeros.
Note that all shown three functions have simple zeros at $t_0=0$,
which, recalling that the initial condition to compute $M(t_0)$ is
taken at $u=0$, implies that the corresponding periodic orbits possess
one point $\varepsilon$-close to $\theta=0$. Note also that the $7:3$
and $5:2$ Melnikov functions have been magnified by a factor $100$ and
$10000$, respectively, which tells us which of them will first
bifurcate when increasing~$\varepsilon$.\\
The $3:1$ periodic orbits are clearly observed in
Figure~\ref{fig:conser_thetac}.
\begin{figure}
\begin{center}
\includegraphics[angle=-90,width=0.8\textwidth]{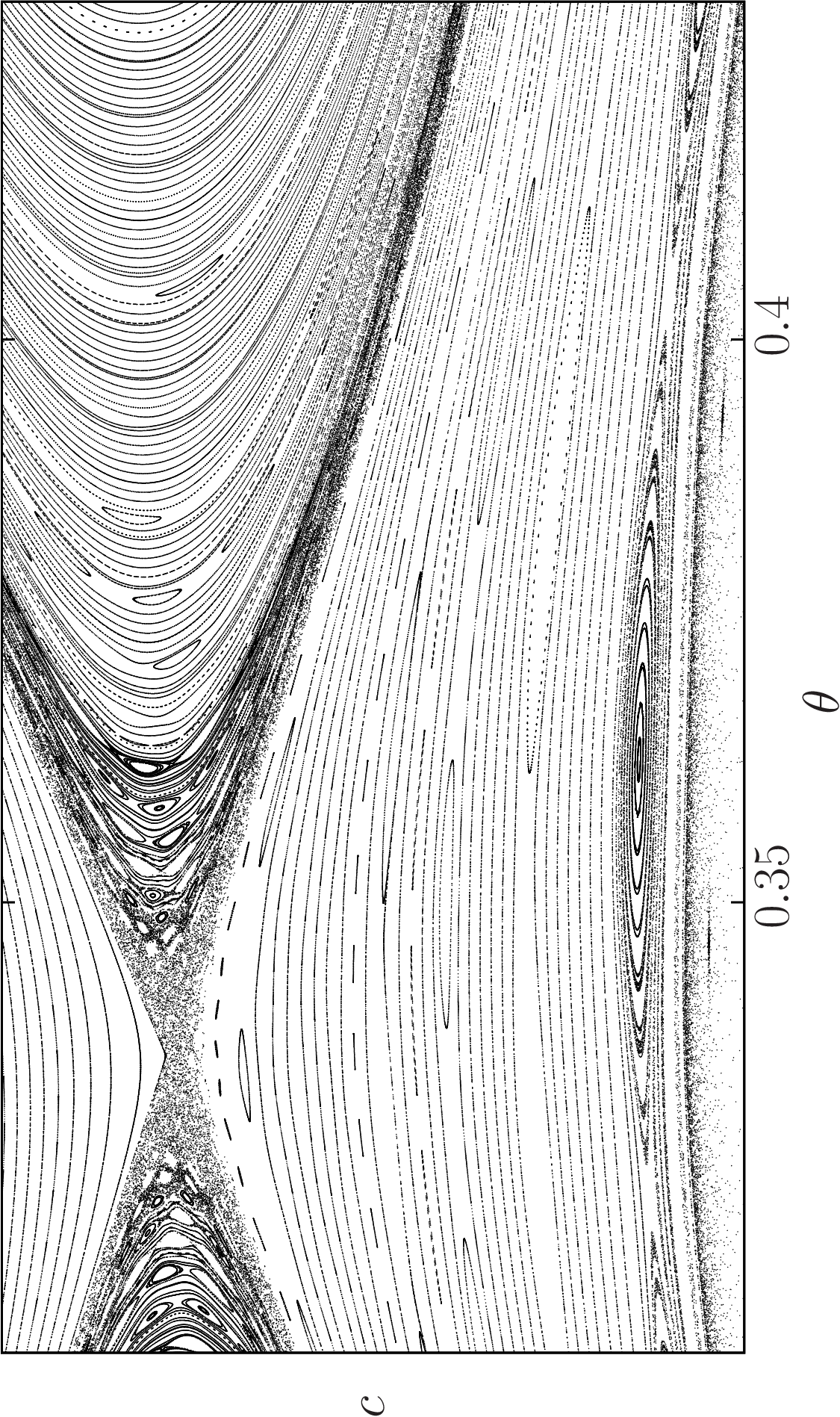}
\end{center}
\caption{Blow up of the $3:1$ resonance labeled in
Figure~\ref{fig:conser_thetac}}
\label{fig:3:1_resonance}
\end{figure}
The saddle type one is magnified in Figure~\ref{fig:3:1_resonance},
where one also observes secondary tori and evidence of chaos given by
the homoclinic tangles. The resonance $7:3$ is magnified in
Figure~\ref{fig:7:3_resonance}.
\begin{figure}
\begin{center}
\includegraphics[angle=-90,width=0.8\textwidth]{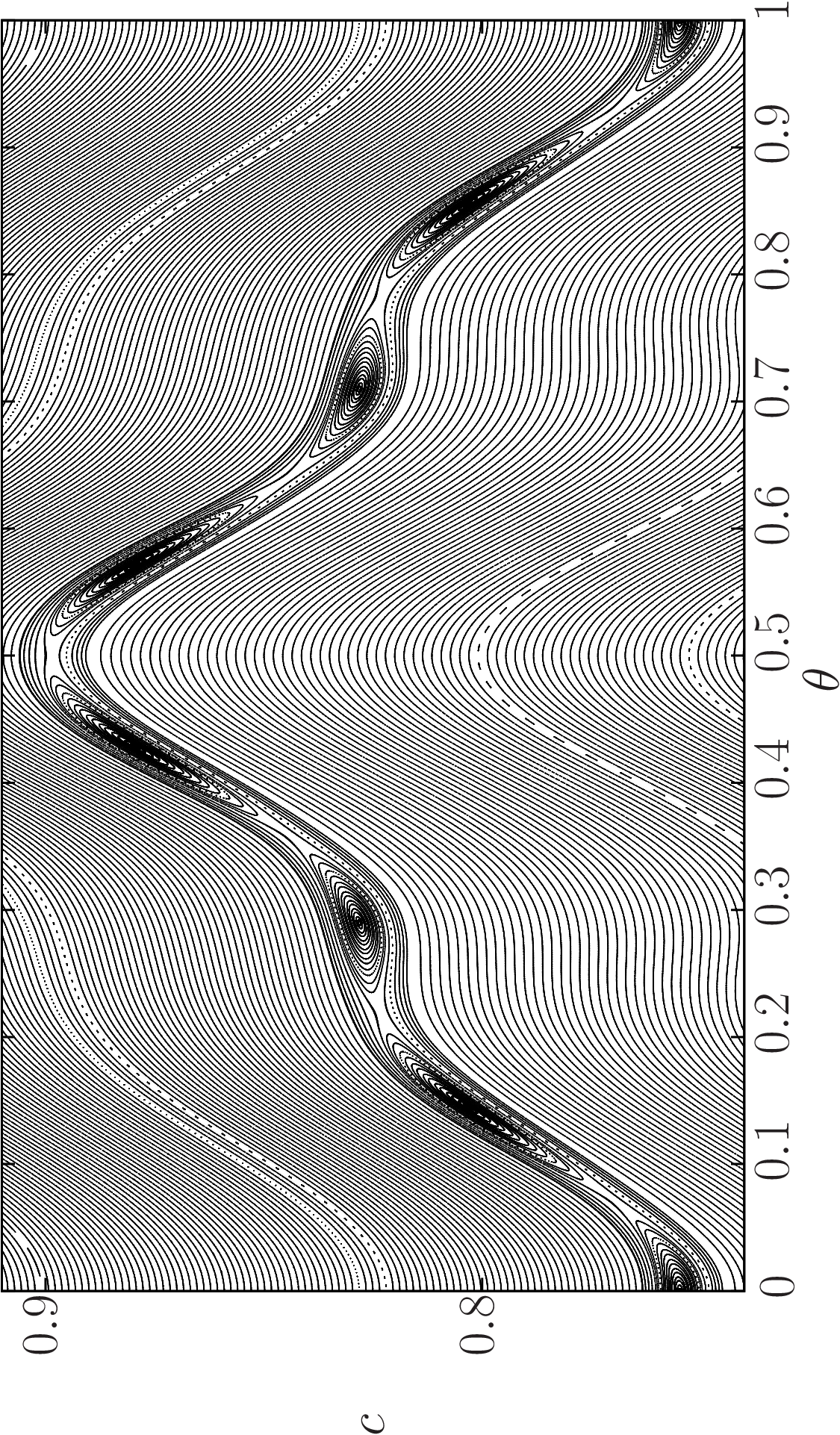}
\end{center}
\caption{Blow up of the $7:3$ resonance labeled in
Figure~\ref{fig:conser_thetac}}
\label{fig:7:3_resonance}
\end{figure}
\begin{figure}
\begin{center}
\begin{picture}(1,0.5)
\put(0,0.25){
\subfigure[\label{fig:conser_thetacw}]{\includegraphics[angle=-90,width=0.5\textwidth]{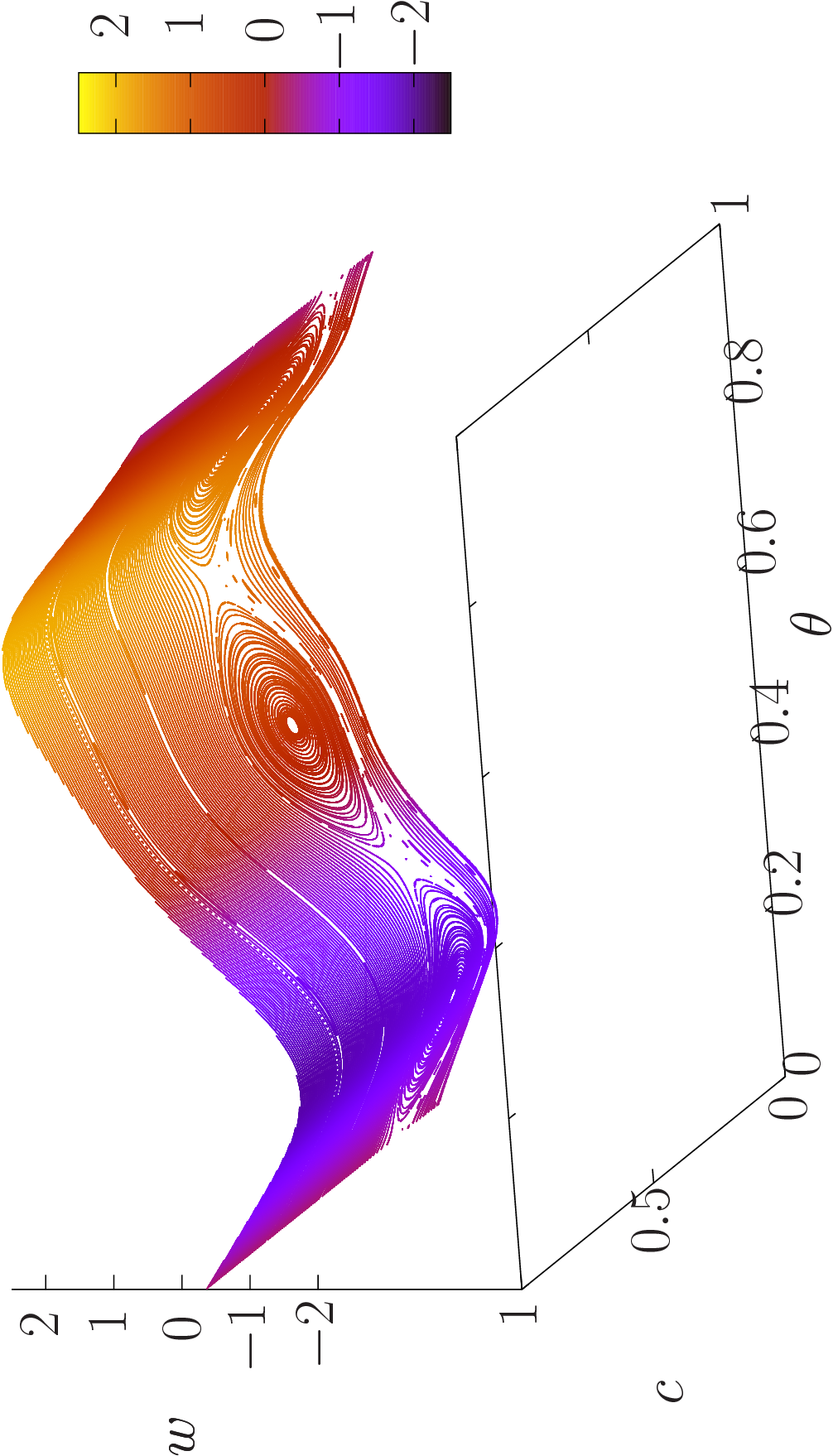}}
}
\put(0.51,0.23){
\subfigure[\label{fig:conser_xyc}]{\includegraphics[angle=-90,width=0.5\textwidth]{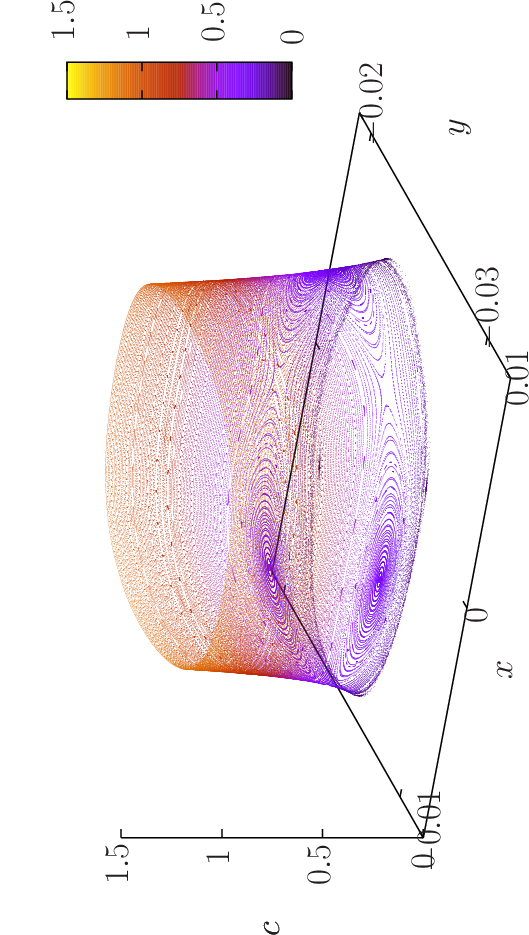}}
}
\end{picture}
\end{center}
\caption{Dynamics restricted to the Normally Hyperbolic Manifold
$\tL_\varepsilon$ in the ambient space for the conservative case.}
\label{fig:conser_ambient-space}
\end{figure}

In Figure~\ref{fig:conser_ambient-space} we show the inner dynamics in
the ambient space. In Figure~\ref{fig:conser_thetacw} we show the
variable $w$ parameterized by $\theta$ and $c$. Note that the
amplitude of the oscillations performed by $w$ increase with $c$.
These oscillations can be periodic or quasi-periodic depending on the
dynamics of $\theta$-$c$. In Figure~\ref{fig:conser_xyc} we see the
behaviour of $x$ and $y$. As sytems $\X$ and $\U$ are coupled through
the Hamiltonian coupling $\varepsilon h$ (the spring), the saddle
point $Q_0$ perturbs into an oscillatory motion.

As explained in Section~\ref{sec:parametrization_method}, the
Newton-like method reported in~\cite{HarCanFigLuqMon16} also provides
corrected versions of the normal bundle $N(\tL_\varepsilon)$ (the
matrix $N_\varepsilon(\theta,c)$, see Section~\ref{sec:step2}); that
is, linear approximations of the parameterizations
$W^{s,+}_\varepsilon$ and $W^{u,+}_\varepsilon$ given in
Equations~\eqref{eq:param_Wseps}-\eqref{eq:param_Wueps}:
\begin{align}
W^{s,+}(\theta,c,s,\tau,r)&=\tK_\varepsilon(\theta,c,s)+
(\tau,r)\cdot{\tilde{\Lambda}_\varepsilon^S(\theta,c)}
+O(\tau^2,r^2,\tau r)\label{eq:linear_stable}\\
W^{u,+}(\theta,c,s,\tau)&= \tK_\varepsilon(\theta,c,s)+
\tau\cdot {\tilde{\Lambda}^U_\varepsilon(\theta,c)}+O(\tau^2),\label{eq:linear_unstable}
\end{align}
where the matrices $\tilde{\Lambda}_\varepsilon^S$ and
$\tilde{\Lambda}_\varepsilon^U$ (with dimensions $2\times6$ and
$1\times6$) are the matrices $\Lambda_\varepsilon^S$ and
$\Lambda_\varepsilon^U$ (having dimensions $5\times 2$ and $5\times
1$) transformed into coordinates $x,y,u,v,w,s$ and properly
transposed.\\
When considering iterates of the linear approximations of the fibers
of the points $\tK_\varepsilon(\theta,c,s)$ one obtains better
approximations of the fibers of the corresponding inner iterates:
\begin{align}
\W^{s,+}(f^{-n}(\theta,c),s)&\simeq\tK_\varepsilon(f^{-n}(\theta,c),s)+
\stro_\varepsilon^{-n}\left( (\tau,r)\cdot
{\tilde{\Lambda}_\varepsilon^S(\theta,c)}\right)
\label{eq:iterated_stable}\\
\W^{u,+}(f^n(\theta,c),s)&\simeq
\tK_\varepsilon(f^n(\theta,c),s)+\stro^n_\varepsilon\left(\tau\cdot
{\tilde{\Lambda}^U_\varepsilon(\theta,c)}\right).\label{eq:iterated_unstable}
\end{align}
The higher $n$ and the smaller $\tau$ and $r$ are, the better the
approximation is.

\begin{figure}
\begin{center}
\begin{picture}(1,0.3)
\put(0,0.0){
\subfigure[]{\includegraphics[width=0.5\textwidth]{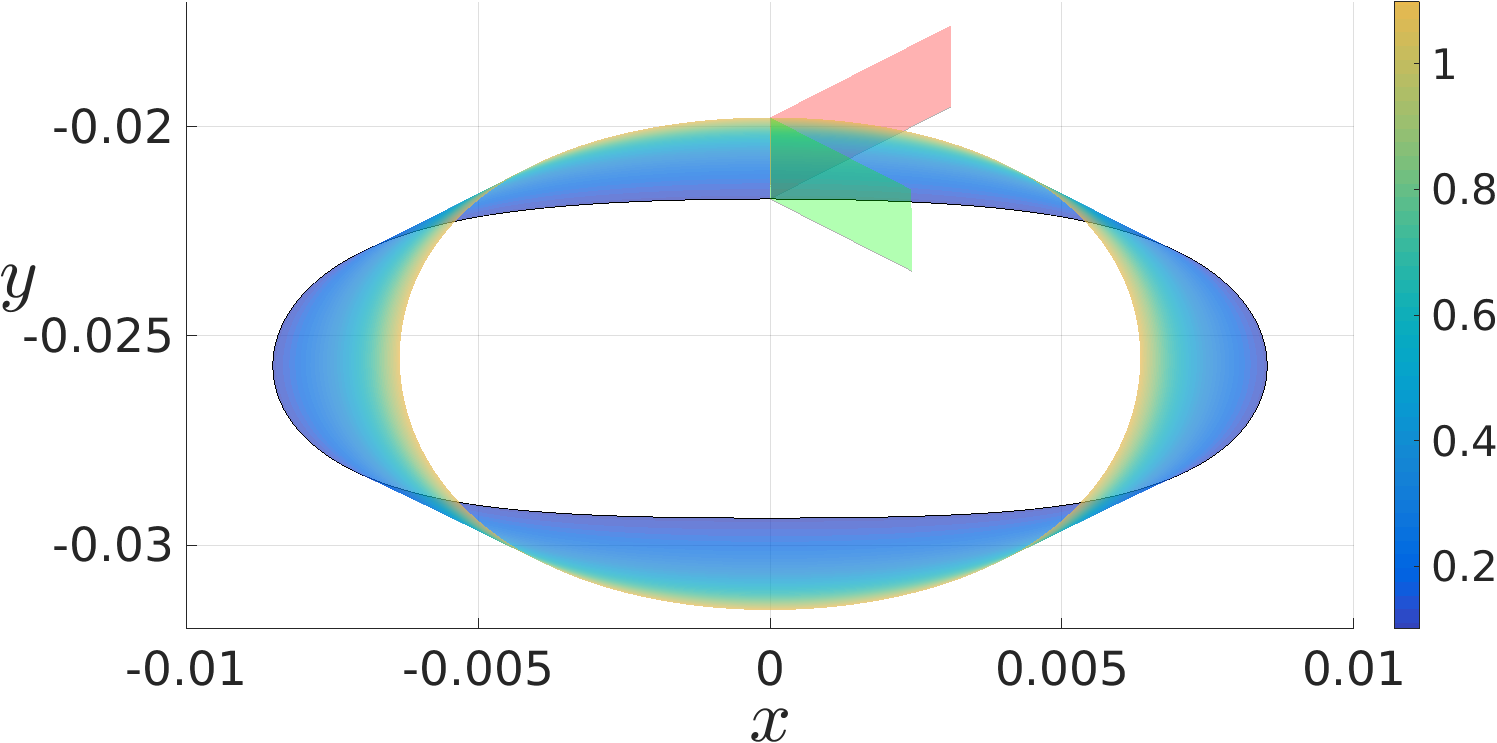}}
}
\put(0.51,0){
\subfigure[]{\includegraphics[width=0.5\textwidth]{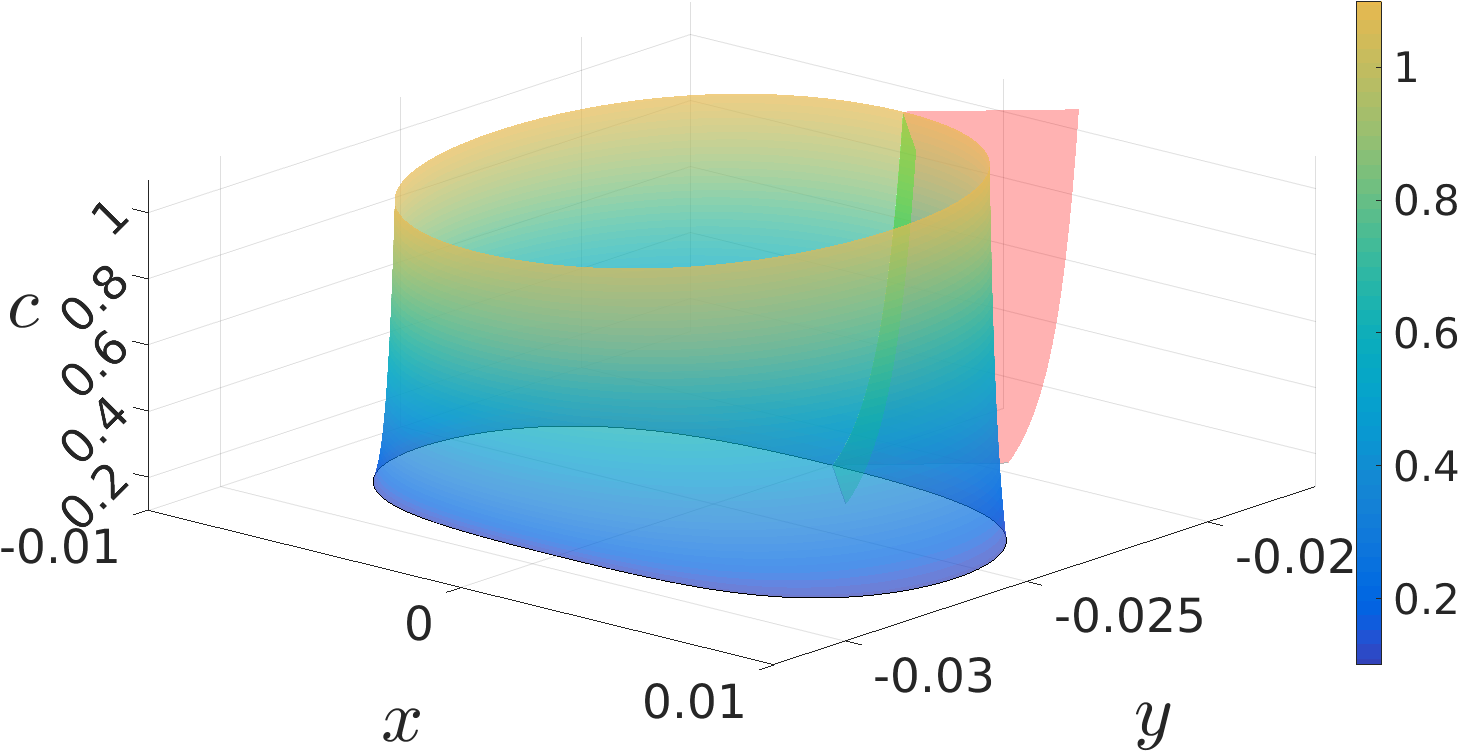}}
}
\end{picture}
\end{center}
\caption{Normally Hyperbolic Manifold and its normal bundle
$N(\tL_\varepsilon)$ for the conservative case: tangent space to the
stable (green) and unstable (red) fiber for
$\varepsilon=6\cdot10^{-2}$, $\tk=1$, $\tzeta=\tc=0$, $\theta=0.5$,
$c\in[0.2,1.2]$, $r=0$ and $\tau\in[0,5\cdot10^{-3}]$.}
\label{fig:conser_stable_unstable}
\end{figure}

To illustrate this, we show in Figure~\ref{fig:conser_stable_unstable}
the $x-y-c$ projection of the normal bundle of a set of points given
by $K_\varepsilon(\theta,c,s)$ with $\theta=0.5$, $c\in[0.1,1.2]$
$s=0$. For each such point we keep $r=0$ and slightly vary $\tau$,
$\tau\in[0,5\cdot 10^{-3}]$, in
Equations~\eqref{eq:linear_stable}-\eqref{eq:linear_unstable}.\\
\begin{figure}
\begin{center}
\includegraphics[width=0.8\textwidth]{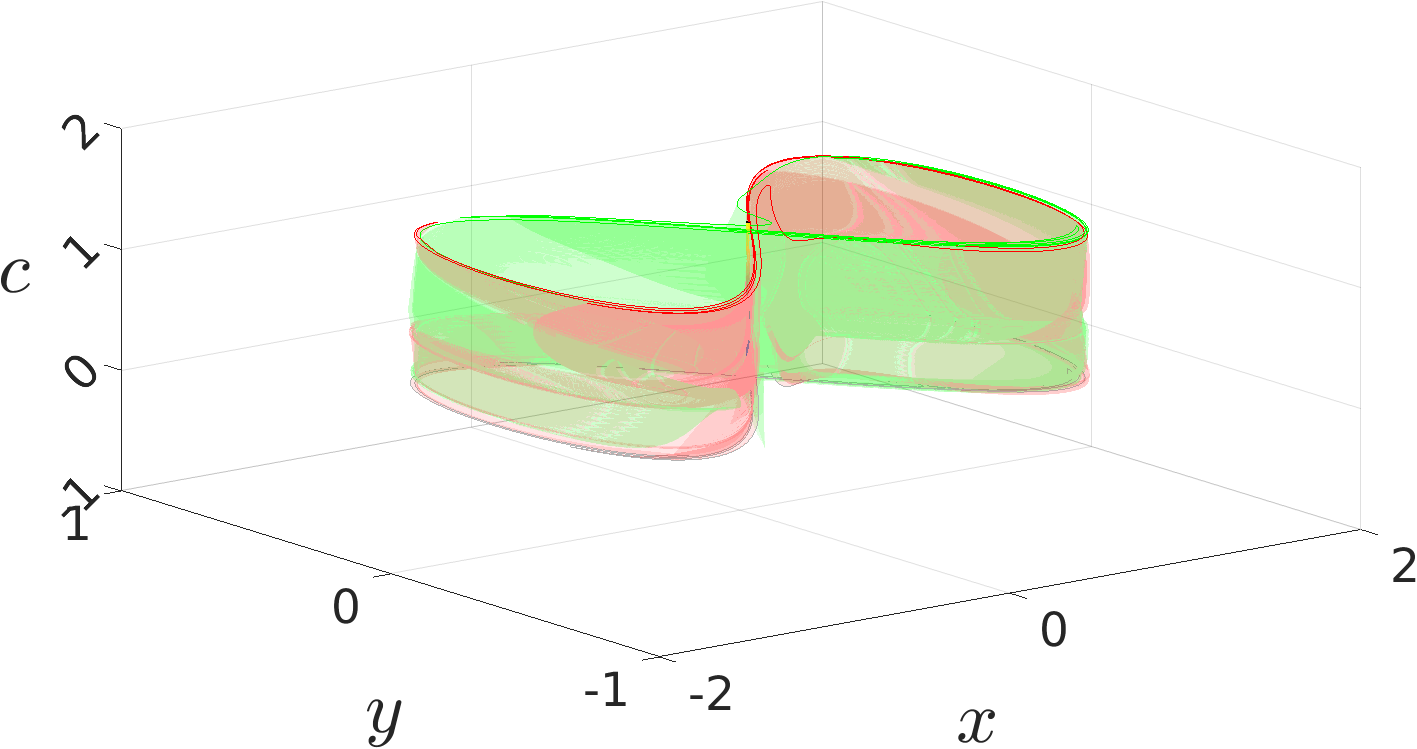}
\end{center}
\caption{Iteration of the normal bundle shown in
Figure~\ref{fig:conser_stable_unstable}. Green: $7$-th backwards
iteration of the stable bundle. Red: $7$-th forwards iteration of the
unstable bundle.}
\label{fig:conser_iterated_bundlexyc}
\end{figure}
\begin{figure}
\begin{center}
\includegraphics[width=0.8\textwidth]{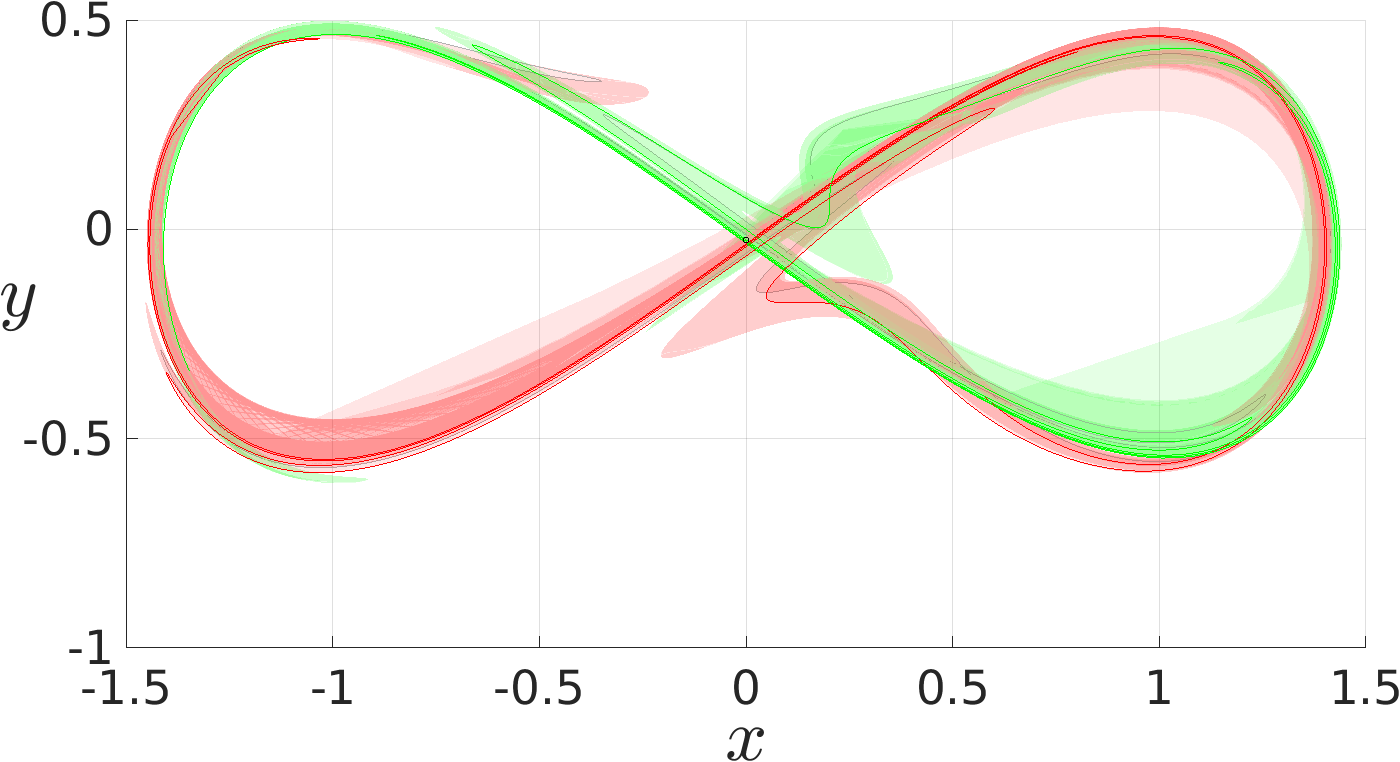}
\end{center}
\caption{Projection in the $x-y$ plane of
Figure~\ref{fig:conser_iterated_bundlexyc}.}
\label{fig:conser_iterated_bundlexy}
\end{figure}%
In Figures~\ref{fig:conser_iterated_bundlexyc}
and~\ref{fig:conser_iterated_bundlexy} we show the global
approximation of stable and unstable fibers by iterating $7$ times
($n=7$ in
Equations~\eqref{eq:iterated_stable}-\eqref{eq:iterated_unstable}) the
surfaces shown in Figure~\ref{fig:conser_stable_unstable}.\\
Figure~\ref{fig:conser_iterated_bundlexyc} shows evidence of
homoclinic intersections. Provided that the two beams are coupled by
means of a conservative coupling (a spring), there is in this case hope
to observe Arnold diffusion leading to $O(1)$ variations of the
coordinate $c$. A study of homoclinic intersections, the Scattering
map (see~\cite{DelLlaSea06,DelLlaSea08}) and shadowing trajectories is
left for future work.

Note that if the elastic constant of the spring is set to $\tk=0$,
then the two beams remain uncoupled and the energy, $c$, of system
$\U$ can only vary through the inner dynamics. Hence, in such a
situation, homoclinic excursions do not inject extra energy to the
beam represented by system $\U$. In other words, in this case, the
Scattering map becomes the identity up to first order terms and there
is no hope to observe Arnold diffusion in without the spring.

\subsection{Dissipative case}\label{sec:numerical_dissipative}
\subsubsection{Weak damping and conservative coupling}\label{sec:cons-coupling_damping}
When adding small dissipation, hyperbolicity of the saddle periodic
orbits guarantees their persistence for small enough dissipation.
However, as shown in~\cite{SimVie10}, when perturbed with dissipation,
elliptic periodic orbits of area preserving maps become attracting
foci.
\begin{figure}
\begin{center}
\includegraphics[angle=-90,width=0.8\textwidth]{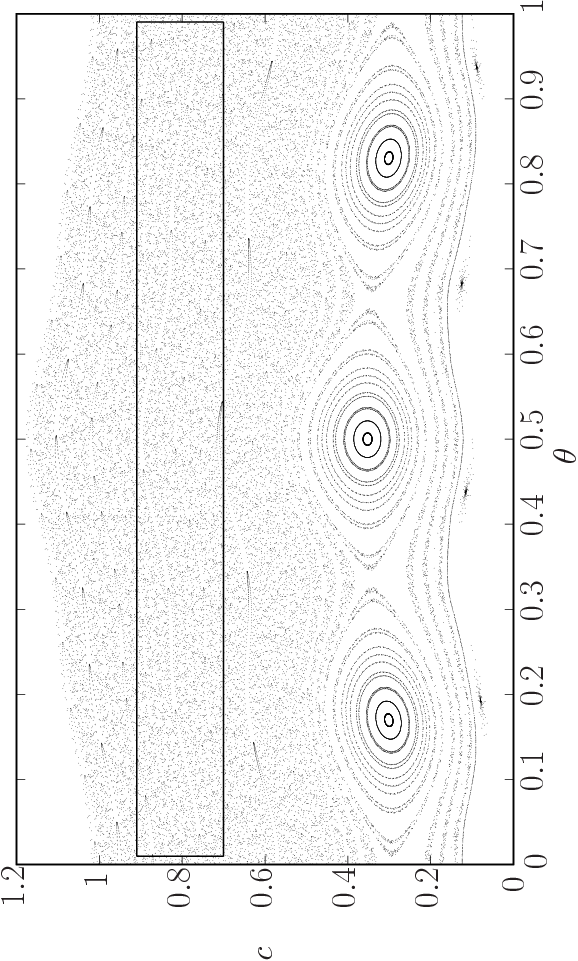}
\end{center}
\caption{Inner dynamics in $\tL_\varepsilon$ for weak damping:
$\varepsilon=6\cdot 10^-2$, $\tzeta=6\cdot 10^{-5}$ and $\tc=0$. We
have taken $50$ initial conditions at $\theta=0.5$ and iterated them
$1000$ times. The region labeled is maximized in
Figure~\ref{fig:7-3_onlydamping}, where $400000$ iterations are used.}
\label{fig:only-damping_global}
\end{figure}
As shown in Figure~\ref{fig:only-damping_global}, this occurs with
$f_\varepsilon$ as well. There, $50$ initial conditions are taken at
$\theta=0.5$ and iterated only $1000$ times for $\varepsilon=6\cdot
10^{-2}$, $\tc=0$, $\tk=1$ and $\tzeta=6\cdot 10^{-5}$, which
corresponds to an absolute magnitude of the damping coefficient of
$\varepsilon\tzeta=3.6\cdot10^{-6}$. Some of them are attracted to the
$3:1$ resonant focus, while others skip the separatrices of the saddle
$3:1$ periodic orbit and are attracted to lower energy attractors.
\begin{figure}%
\begin{center}
\includegraphics[angle=-90,width=0.8\textwidth]{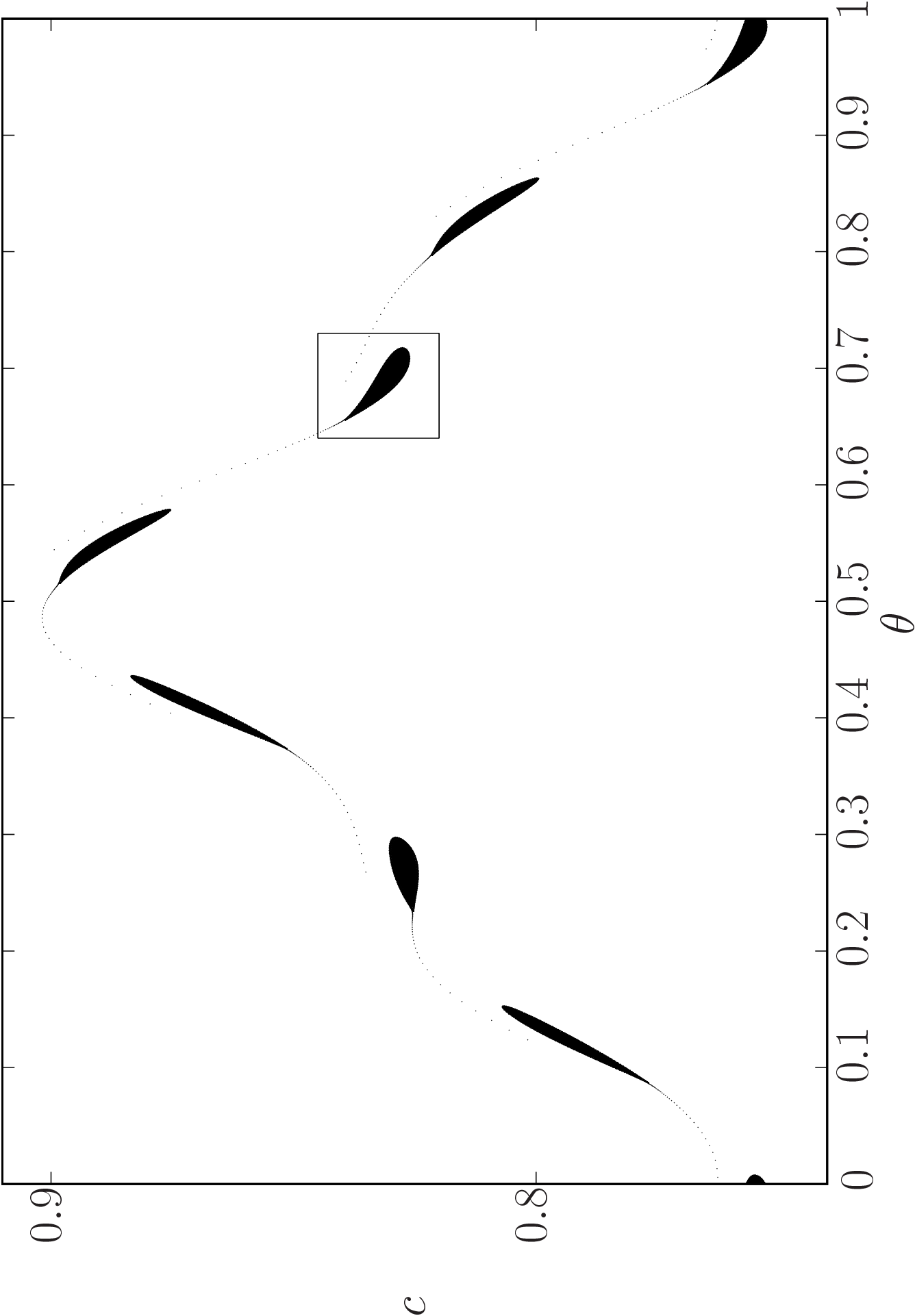}
\end{center}
\caption{Approximated unstable manifold of the saddle periodic orbits
corresponding to $7:3$ resonance under the presence of small damping:
$\tc=0$, $\tzeta=6\cdot 10^{-5}$, $\tk=1$ and $\varepsilon=6\cdot
10^{-2}$. The unstable manifold leaves the saddle point and rolls
about the $7:3$ resonant attracting focus. The labeled region is
magnified in Figure~\ref{fig:7-3_onlydamping_zoom}.}
\label{fig:7-3_onlydamping}
\end{figure}%
\begin{figure}
\begin{center}
\includegraphics[angle=-90,width=0.8\textwidth]{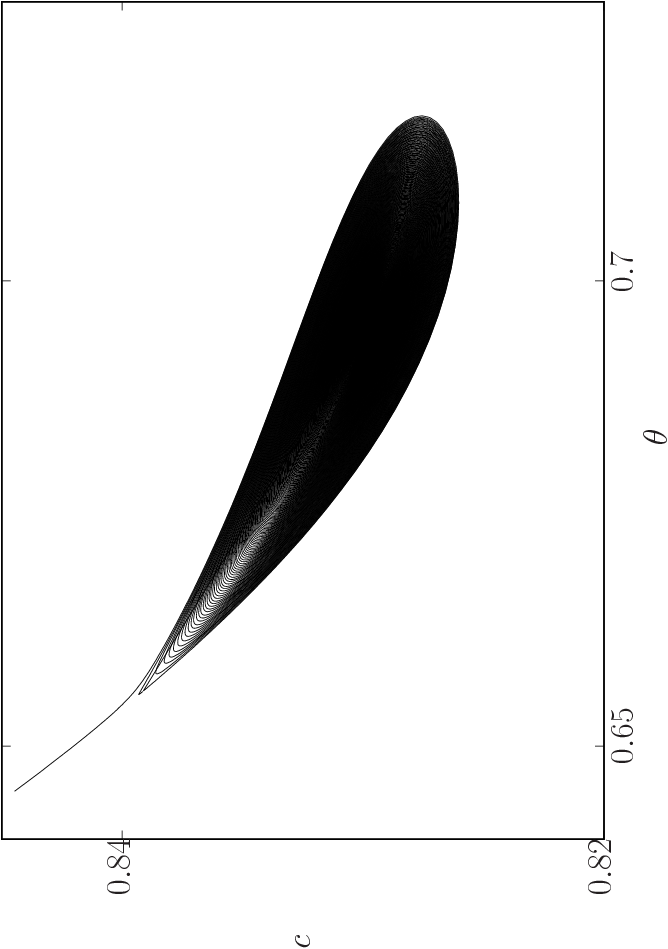}
\end{center}
\caption{Blow up of the labeled region in
Figure~\ref{fig:7-3_onlydamping}: unstable manifold of the $7:3$
resonant periodic orbit under weak damping.}
\label{fig:7-3_onlydamping_zoom}
\end{figure}%
In Figures~\ref{fig:7-3_onlydamping}
and~\ref{fig:7-3_onlydamping_zoom} we show this in more detail for the
$7:3$ resonant periodic orbits. There we have taken an initial
condition very close to the unstable manifold of the saddle $7:3$
resonant periodic orbit. For forward iterates we see how this unstable
manifold slowly rolls about the attracting focus while backwards
iterates approach the saddle periodic orbit and rapidly scape due to
limited numerical accuracy.  
\begin{figure}
\begin{center}
\begin{picture}(1,0.5)
\put(0,0.32){
\subfigure[\label{fig:7-3_iterates_theta}]{
\includegraphics[angle=-90,width=0.5\textwidth]{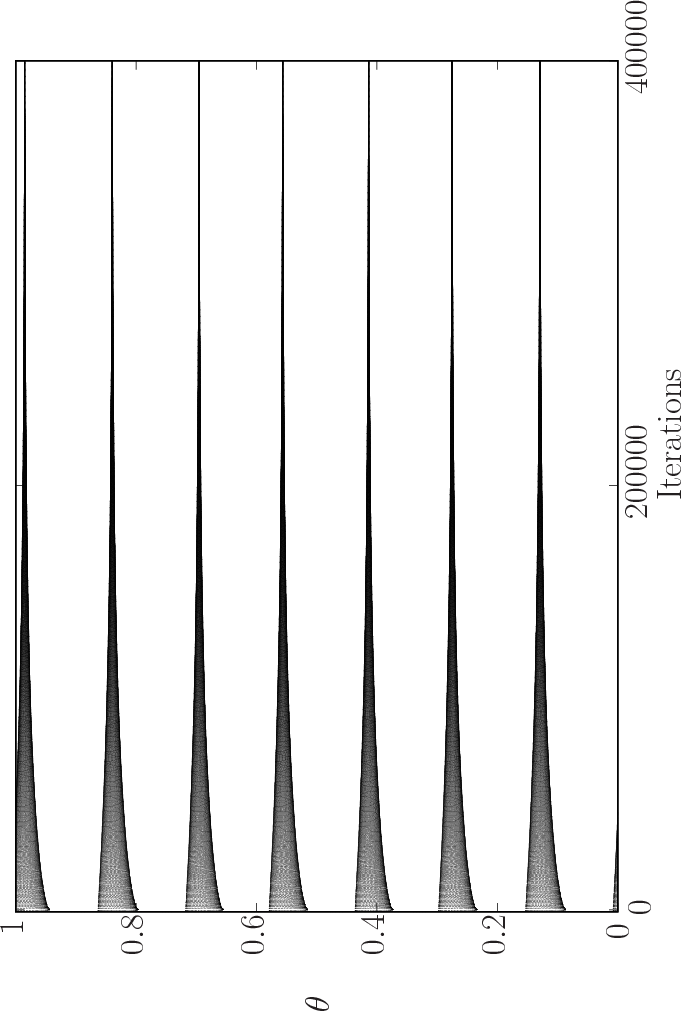}}
}
\put(0.51,0.32){
\subfigure[\label{fig:7-3_iterates_c}]{\includegraphics[angle=-90,width=0.5\textwidth]{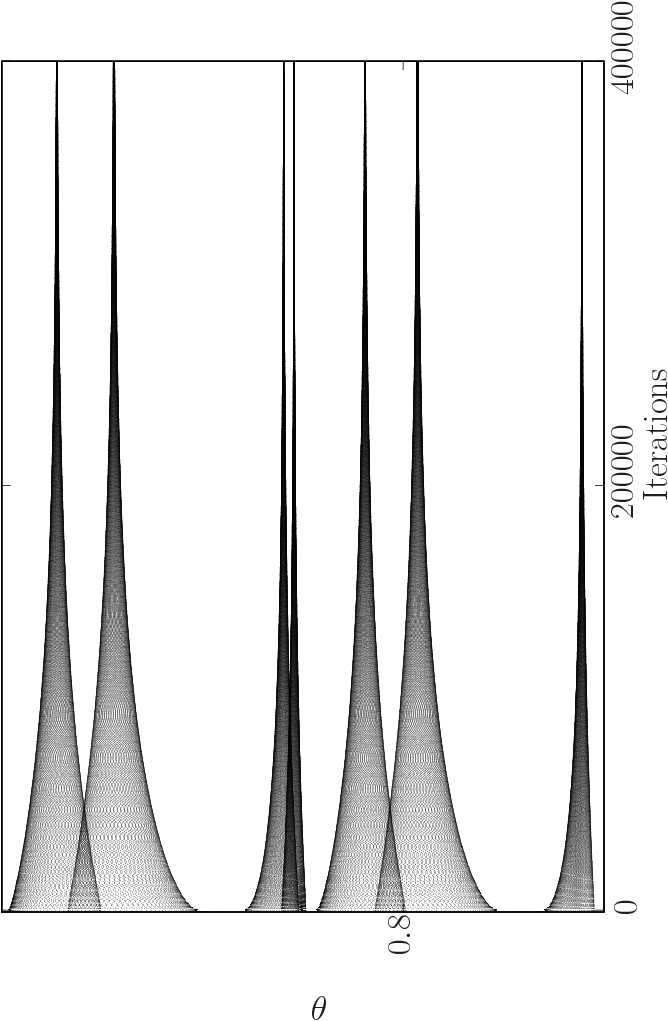}}
}
\end{picture}
\end{center}
\caption{Iterates of
$f_\varepsilon$ for the initial condition taken at the unstable
manifold shown in Figure~\ref{fig:7-3_onlydamping}
and~\ref{fig:7-3_onlydamping_zoom}.}
\label{fig:7-3_iterates}
\end{figure}%
\begin{figure}
\begin{center}
\begin{picture}(1,0.5)
\put(0,0.32){
\subfigure[\label{fig:7-3_iterates_theta_zoom}]{
\includegraphics[angle=-90,width=0.5\textwidth]{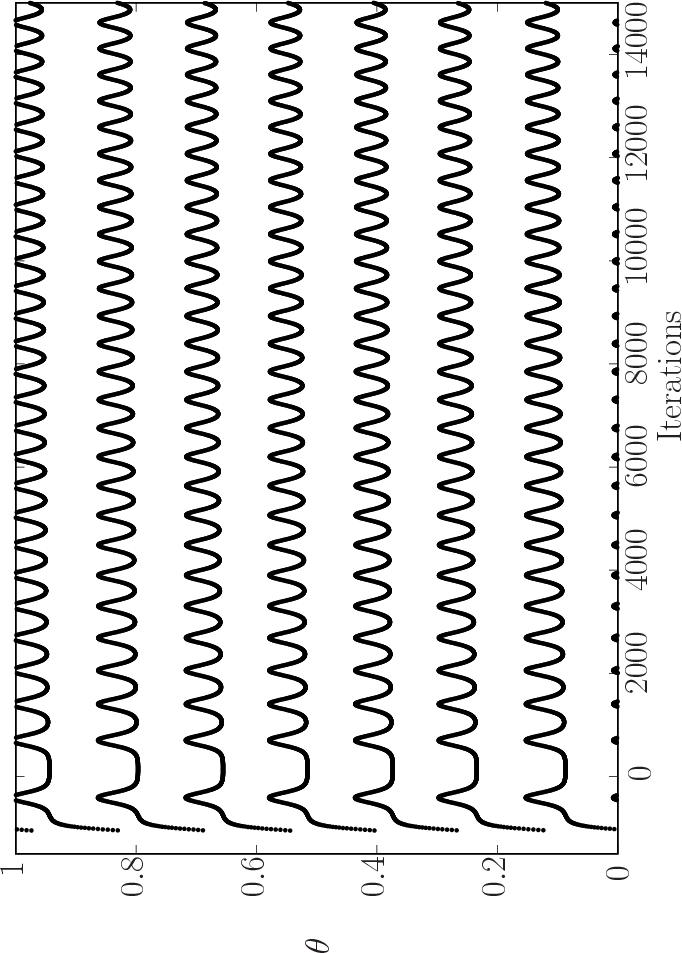}}
}
\put(0.51,0.32){
\subfigure[\label{fig:7-3_iterates_c_zoom}]{\includegraphics[angle=-90,width=0.5\textwidth]{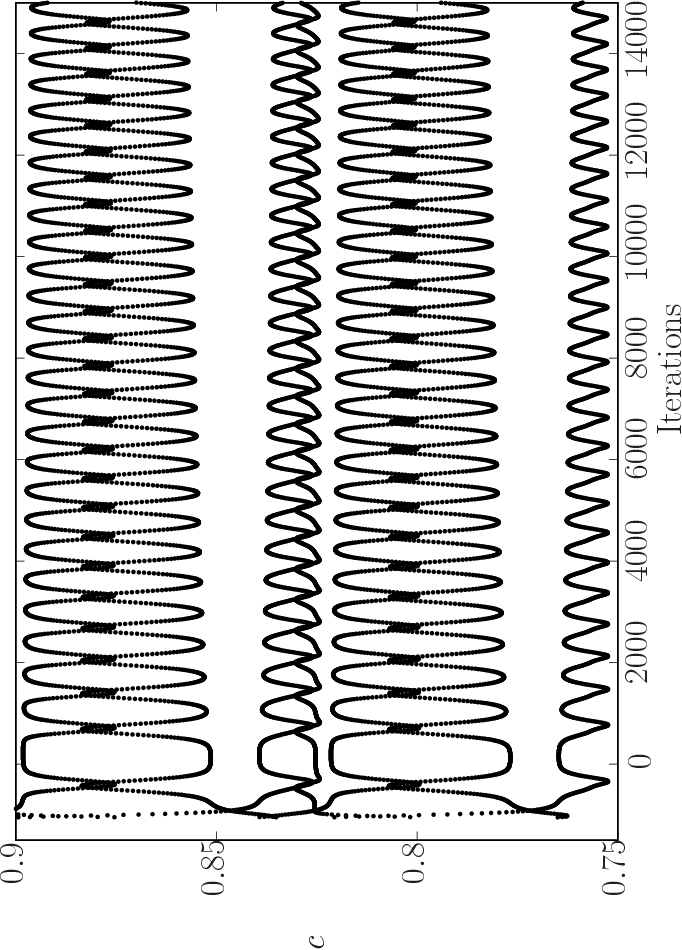}}
}
\end{picture}
\end{center}
\caption{First iterates of $f_\varepsilon$ and $f_\varepsilon^{-1}$
for the initial condition taken at the unstable manifold shown in
Figures~\ref{fig:7-3_onlydamping} and~\ref{fig:7-3_onlydamping_zoom}.}
\label{fig:7-3_iterates_zoom}
\end{figure}%
This is better appreciated in Figures~\ref{fig:7-3_iterates}
and~\ref{fig:7-3_iterates_zoom}, where we show the ``time'' evolution
of $\theta$ and $c$, both forwards and backwards in time.\\

The dissipation exhibited by the inner dynamics is indeed not desired
from the applied point of view, as it implies convergence to lower
energy oscillatory regimes providing lower amplitude alternate voltage
for variable $w$, which is the voltage provided to the load connected
to the harvesting beams (see Figure~\ref{fig:coupled_piezo_spring}).
As mentioned in the Introduction, one of the purposes of this work is
to provide tools that can prevent or slow down this loss of energy,
such as the ones based on outer excursions through homoclinic
intersections. We therefore are interested on studying the manifold
$\tL_\varepsilon$ and its stable and unstable manifolds for the
dissipative case.

Regarding the computations of these manifolds, we obtained results
very similar to those reported for the conservative case in
Section~\ref{sec:numerical_conservative}. That is, in the ambient
space, the manifold $\tL_\varepsilon$ and the normal bundle look very
similar to those shown in Figures~\ref{fig:conser_ambient-space}
and~\ref{fig:conser_stable_unstable}, respectively. Moreover, when
iterating the normal bundle, we obtain global fibers similar to the
ones shown in Figures~\ref{fig:conser_stable_unstable}
and~\ref{fig:conser_iterated_bundlexyc}. As the homoclinic
intersections shown for the conservative case in
Section~\ref{sec:numerical_conservative} are transversal, they are
robust to perturbations, even dissipative ones. Hence, we also observe
evidence of homoclinic connections allowing one to define the
Scattering map. Due to the (conservative) coupling, the Scattering map
may possess $O(\varepsilon)$ terms in the action $c$, although
expressions for this map for dissipative cases have not been reported
anywhere. Hence, through homoclinic excursions, one may inject
$O(\varepsilon)$ into the system which may help slowing down the
dissipation observed in the inner dynamics.  However, Arnold diffusion
is rather unlikely to exist due to the presence of dissipation.


\subsubsection{Full system}\label{sec:full_system}
We now present the numerical results for the full case:
$\tk,\tzeta,\tc>0$.\\
In comparison with the previous case of
Section~\ref{sec:cons-coupling_damping}, we now add the extra
dissipative coupling term given by the coupling piezoelectric effect:
$\tc>0$.  Regarding the inner dynamics, computations reveal that, as
one would expect, the effect is similar to the situation given in
Section~\ref{sec:cons-coupling_damping} when the dissipation was only
due to the damping on the oscillators. We observe that the parameter
$\tc$ seems to contribute less than $\tzeta$ in destroying objects due to
dissipation.
\begin{figure}
\begin{center}
\includegraphics[angle=-90,width=0.8\textwidth]{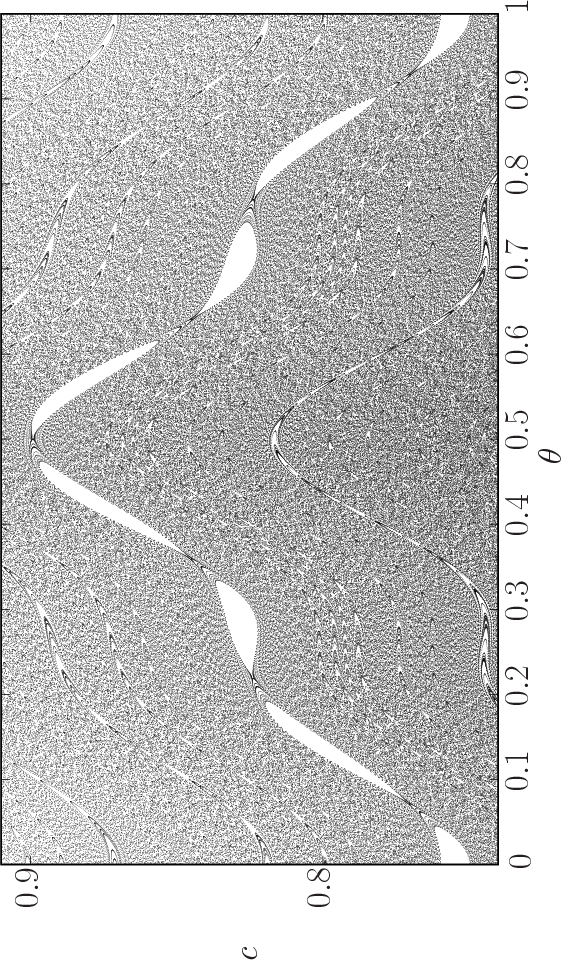}
\end{center}
\caption{Inner dynamics around the 7:3
resonant attracting focus for
$\varepsilon=6\cdot10^{-2}$, $\tzeta=6\cdot10^{-5}$ and $\tc=10^{-4}$.
}
\label{fig:7-3_onlycoupling_zoom}
\end{figure}%
In Figure~\ref{fig:7-3_onlycoupling_zoom} we show how the
$7:3$ resonant saddle and focus periodic orbits still persist for
$\tc=10^{-4}$.
\begin{figure}
\begin{center}
\includegraphics[angle=-90,width=0.8\textwidth]{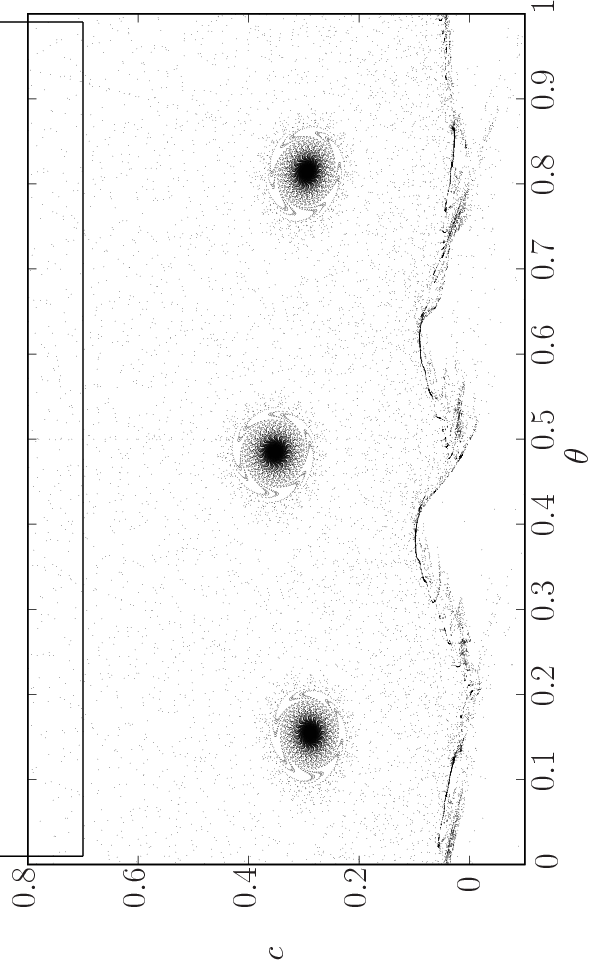}
\end{center}
\caption{Inner dynamics for $\varepsilon=6\cdot 10^{-2}$,
$\tzeta=6\cdot10^{-5}$ and $\tc=2\cdot10^{-1}$.}
\label{fig:only_coupling_global_tc2e-1}
\end{figure}%
As shown in Figure~\ref{fig:only_coupling_global_tc2e-1}, for larger
values of $\tc$, periodic orbits bifurcate and most initial conditions
are attracted towards a low energy attractor. For $\tc=2\cdot
10^{-1}$, the $3:1$ resonant periodic attracting focus still exists.
\begin{figure}
\begin{center}
\begin{picture}(1,0.5)
\put(0,0.32){
\subfigure[\label{fig:3-1_iterates_theta}]{
\includegraphics[angle=-90,width=0.5\textwidth]{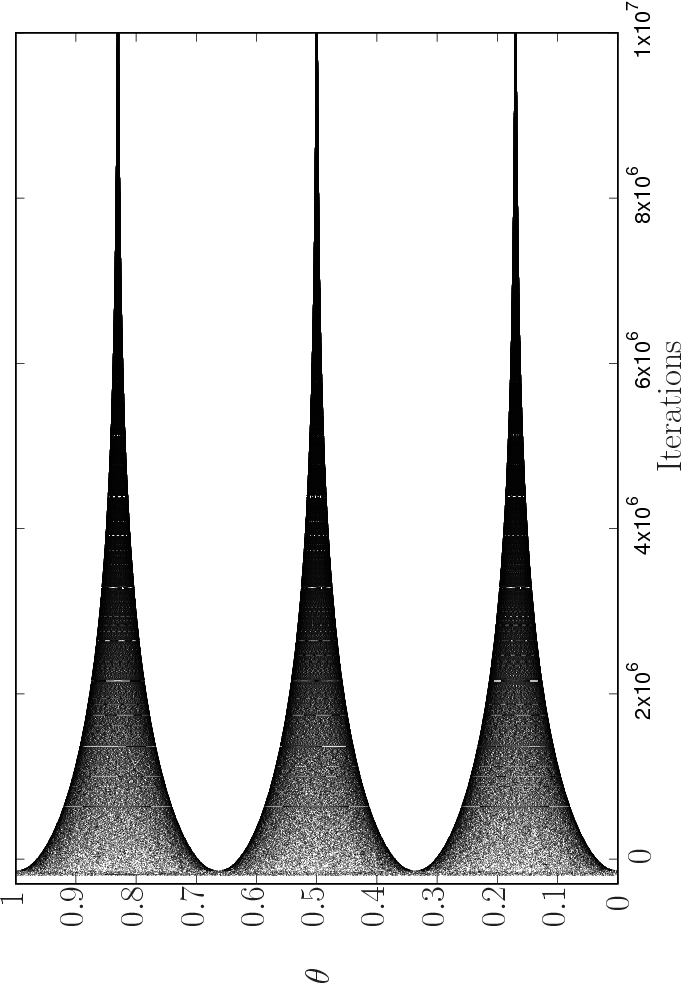}}
}
\put(0.51,0.32){
\subfigure[\label{fig:3-1_iterates_c}]{\includegraphics[angle=-90,width=0.5\textwidth]{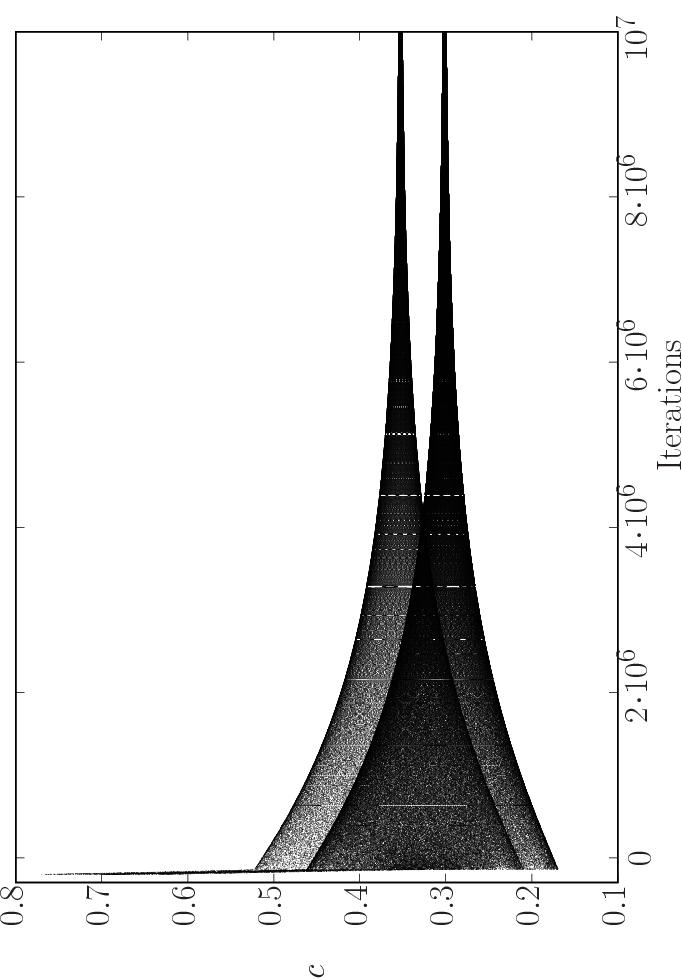}}
}
\end{picture}
\end{center}
\caption{Iterates by $f_\varepsilon$ and $f_\varepsilon^{-1}$
of an initial condition attracted by the $3$-periodic focus.}
\label{fig:3-1_iterates}
\end{figure}%
Iterates of $f_\varepsilon$ and $f^{-1}_\varepsilon$
close to the $3:1$ attracting focus are shown in
Figure~\ref{fig:3-1_iterates_theta}. Backwards iterates
are also shown until the trajectory scapes.

The manifold $\tK_\varepsilon$ in the ambient space and its normal
bundle $N(\tK_\varepsilon)$ look very  similar to the conservative
case considered in Section~\ref{sec:numerical_conservative}
(Figures~\ref{fig:conser_ambient-space}
and~\ref{fig:conser_stable_unstable}). We therefore omit including
including similar figures. However, in this case we also show the
$7th$ iterate of the normal bundle in
Figures~\ref{fig:full_system_iterated_bundlexyc1}
and~\ref{fig:full_system_iterated_bundlexy1}. As one can see there,
for the chosen parameter values,
there still exists evidence of existence of intersections between the
stable and unstable manifolds leading to homoclinic connections.
Hence, we show that there is hope that, through these homoclinic
connections, outer excursions can inject energy to the beam defined by
Hamiltonian $\U$ that may help the system slow down the loss of energy
shown by the inner dynamics previously discussed.

\begin{figure}
\begin{center}
\includegraphics[width=0.8\textwidth]{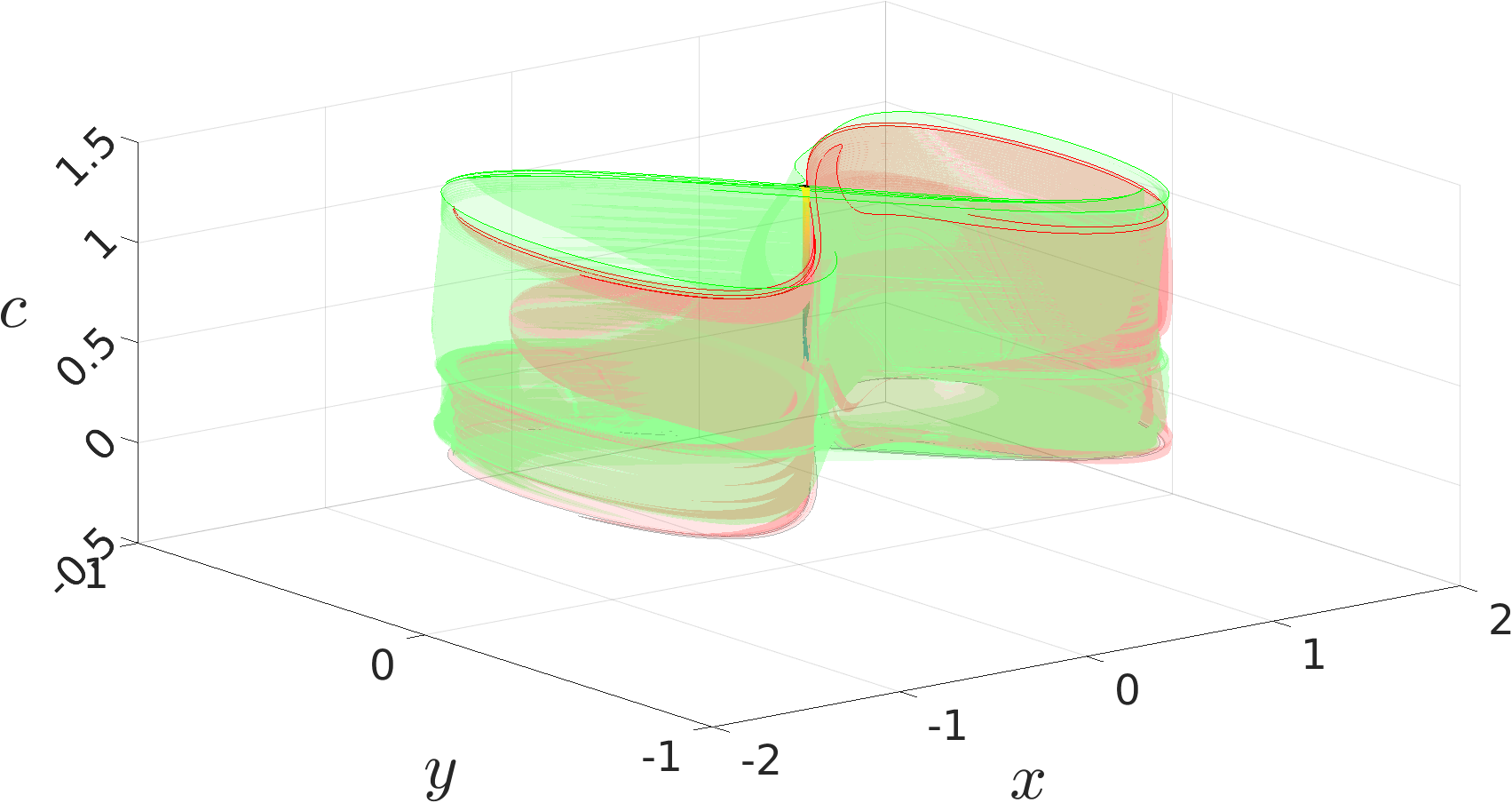}
\end{center}
\caption{Stable and unstable fibers of }
\label{fig:full_system_iterated_bundlexyc1}
\end{figure}
\begin{figure}
\begin{center}
\includegraphics[width=0.8\textwidth]{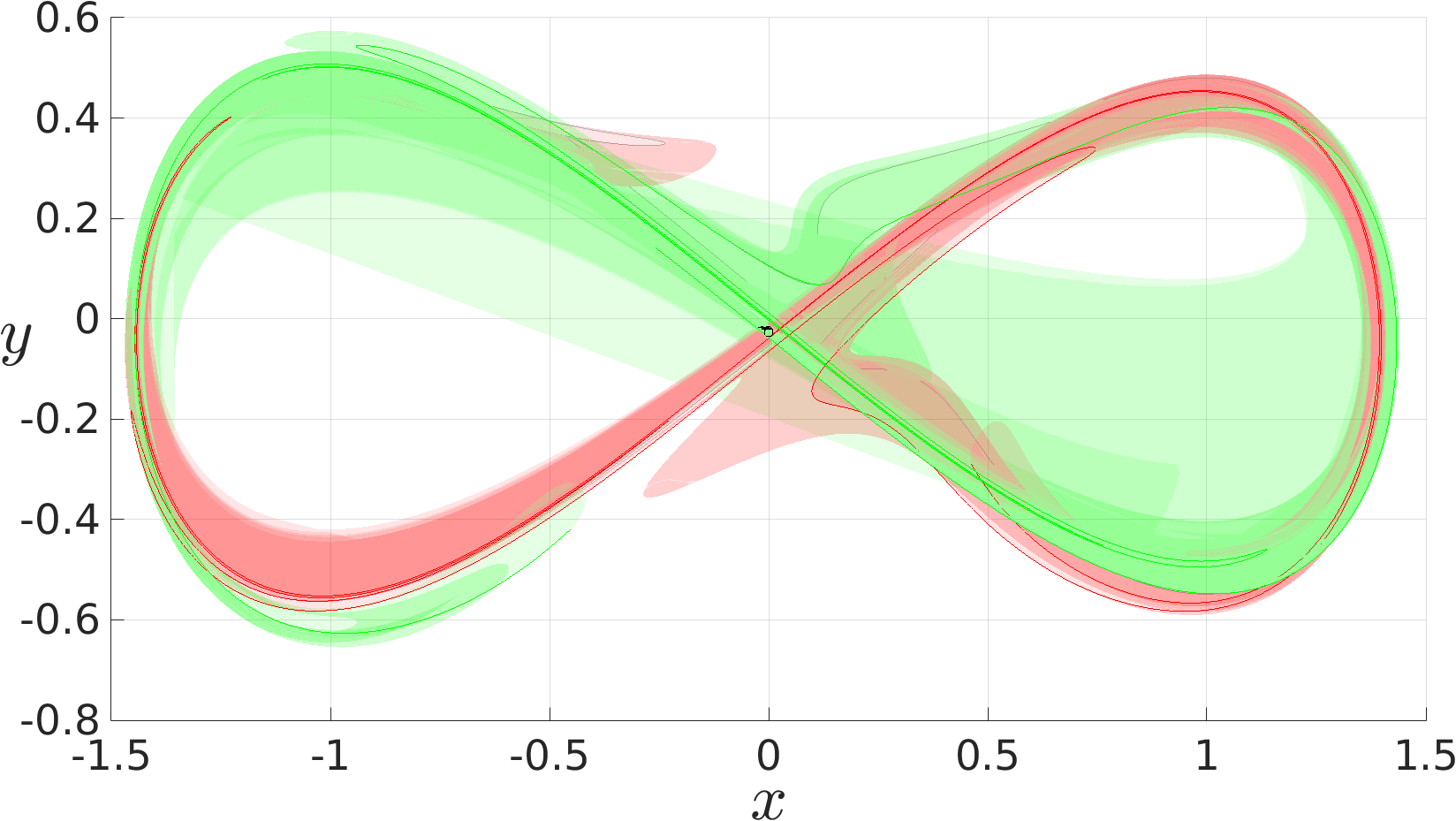}
\end{center}
\caption{Projection to the $x-y$ plane of
Figure~\ref{fig:full_system_iterated_bundlexyc1}}
\label{fig:full_system_iterated_bundlexy1}
\end{figure}

\section{Conclusions}\label{sec:conclusions}
This paper is a first step towards the use of theory related to Arnold
diffusion in energy harvesting systems based on bi-stable oscillators,
such as piezoelectric beams or cantilevers. Such theory could be
extremely useful in this field, as it precisely deals with the
accumulation of energy in oscillators absorbed from a given periodic
source.\\
The dynamics of such systems is given by the coupling of periodically
forced Duffing oscillators. The coupling is given by an extra variable
(a voltage) which at the same time adds extra dissipation to the
intrinsic damping. Moreover, this coupling adds and extra dimension to
the system. The goal of this work is to provide a theoretical and
numerical
background to study the existence and persistence of Normally
Hyperbolic Manifolds and the intersection between their unstable and
stable manifolds. Such intersections are the basis of the so-called
``outer dynamics'' in Arnold diffusion theory. Through these intersections,  the system may increase
its energy by absorbing energy from the source, which is studied by
the ``Scattering'' map. To benefit higher order of energy abortion,
we  have proposed to add to the system an extra conservative coupling
given by a spring. In the absence of damping and the piezoelectric
dissipative coupling, this extra coupling could allow the presence of
Arnold diffusion when periodically forced.

In the absence of forcing, damping and both couplings, we have proven
the existence of a $3$-dimensional Normally Hyperbolic Invariant
Manifold with $5$ and $6$-dimensional unstable and stable manifolds.
The unperturbed manifold possesses boundaries; despite the system's
dissipation, it persists and is unique.  However, in the presence of
dissipation, the inner dynamics becomes unbounded and hence the
manifold needs to be non-uniquely extended beyond the original
boundaries.

By implementing the Parameterization method we have computed this
manifold, its inner dynamics and good approximations of its stable and
unstable manifolds. We have numerically investigated three different
situations.\\
In the absence of damping and dissipative coupling, but including the
conservative one, the inner dynamics is given by
a symplectic map. The stable and unstable manifolds
intersect, giving rise to outer excursions through homoclinic
intersections. In this case, the Scattering map could be properly
defined, and first order terms could be computed as usual.\\
When the damping is enabled, the inner dynamics is not given by an
area preserving map anymore.  Instead, the inner dynamics at the
manifold possesses global attractors to which trajectories are
attracted losing energy. However, we have shown evidence of existence
of homoclinic connections.  Such intersections may lead to outer
excursions injecting energy, which could be used to overcome or slow
down the loss of energy given at inner dynamics. This is extremely
desired from the applied point of view and may help to optimize energy
harvesting systems based on this type of oscillators. However, the
system may not exhibit Arnold diffusion anymore due to the presence of
dissipation.\\
We have finally numerically studied the full system, and shown that a
similar situation applies up to higher values of the piezoelectric
coupling.  

We propose to continue our work by providing a theoretical background
for the existence of homoclinic intersections (Melnikov theory) and
the Scattering map for dissipative systems, on one hand. On the other
hand, we also propose an accurate numerical computation of homoclinic
intersections and the Scattering map, in order to quantify the amount
of absorbed energy from the source.
\bibliographystyle{plain}
\bibliography{DynamicalSystems.bib,harvesting.bib,melnikov_like.bib,we.bib}

\end{document}